\documentclass[11pt,a4paper,aps,reqno]{article}
\usepackage{titlesec}
\usepackage{etex}
\usepackage{inputenc}
\usepackage[english]{babel}
\usepackage{lmodern}					
\usepackage{textcomp}				
\usepackage{verbatim}			
\usepackage[toc,page]{appendix} 		
\usepackage{cite}
\usepackage[low-sup]{subdepth}
\usepackage{pict2e}

\usepackage{amsmath,					
			amsthm,
			amsfonts,
			amssymb, 
			mathtools,xypic}
\usepackage{physics}
\usepackage{enumitem} 
\usepackage{todonotes}
\usepackage{stmaryrd}
\usepackage[mathcal]{euscript}
\usepackage{mathrsfs}
\usepackage{slashed}
\usepackage{ytableau}
\usepackage{youngtab}
\usepackage{bibentry}
\usepackage{musicography}
\usepackage{tensor}
\usepackage[new]{old-arrows}

\usepackage{fonttable}
\DeclareMathAlphabet{\mathpzc}{OT1}{pzc}{m}{it}

\DeclareFontFamily{U}{matha}{\hyphenchar\font45}
\DeclareFontShape{U}{matha}{m}{n}{
      <5> <6> <7> <8> <9> <10> gen * matha
      <10.95> matha10 <12> <14.4> <17.28> <20.74> <24.88> matha12
      }{}
\DeclareSymbolFont{matha}{U}{matha}{m}{n}

\DeclareMathSymbol{\pm}            {2}{matha}{"08}
\DeclareMathSymbol{\mp}            {2}{matha}{"09}
\DeclareMathSymbol{\varleftarrow}{3}{matha}{"D0}
\DeclareMathSymbol{\varrightarrow}{3}{matha}{"D1}
\DeclareMathSymbol{\vee}           {2}{matha}{"5F}
\DeclareMathSymbol{\wedge}         {2}{matha}{"5E}
\DeclareMathSymbol{\leq}         {3}{matha}{"A4}
\DeclareMathSymbol{\geq}         {3}{matha}{"A5}
\DeclareMathSymbol{\in}            {3}{matha}{"50}
\DeclareMathSymbol{\owns}          {3}{matha}{"51}

\usepackage{bbold}

\usepackage[all]{xy}
\usepackage{tikz-cd} 
\usetikzlibrary{tikzmark}
\usetikzlibrary{arrows}

\usepackage{graphicx}				
\usepackage{tabularx}				

\usepackage{authblk}   

\newtheorem{theorem}{Theorem}[section]
\newtheorem{corollary}[theorem]{Corollary}
\newtheorem{lemma}[theorem]{Lemma}
\newtheorem{proposition}[theorem]{Proposition}

\theoremstyle{definition}
\newtheorem{defi}[theorem]{Definition}
\newtheorem{remark}[theorem]{Remark}
\newtheorem{example}[theorem]{Example}

\usepackage{cfr-lm}

\usepackage[hmargin=2.2cm,vmargin=2.5cm]{geometry}		
\numberwithin{equation}{section}

\makeatletter
\DeclareRobustCommand{\Lcorner}{\mathbin{\mspace{1mu}\text{\L@corner}\mspace{1mu}}}
\newcommand{\L@corner}{%
  \setlength{\unitlength}{\fontcharht\font`T}%
  \begin{picture}(0.8,0)
  \roundcap
  \Line(0.1,0.95)(0.1,0.05)
  \Line(0.1,0.05)(0.7,0.05)
  \end{picture}%
}
\makeatother

\newcommand{\Loc}{{\scriptscriptstyle\mkern 1mu\Lcorner}}

\newcommand{\Hom}{\mathrm{Hom}}
\newcommand{\End}{\mathrm{End}}

\newcommand{\nn}{\nonumber}

\newcommand{\ad}{\mathrm{ad}\,}

\makeatletter
\newcommand{\thickhline}{%
    \noalign {\ifnum 0=`}\fi \hrule height 1pt
    \futurelet \reserved@a \@xhline
}
\newcolumntype{'}{@{\hskip\tabcolsep\vrule width 1pt\hskip\tabcolsep}}
\makeatother
\newcolumntype{"}{@{\hskip\tabcolsep\vrule width 1.5pt\hskip\tabcolsep}}
\makeatother

\newcommand{\scr}{}
\renewcommand{\mathscr}{\mathcal}
\newcommand{\scrG}{\scr{G}}

\newcommand{\scrU}{\scr{U}}

\usepackage[colorlinks]{hyperref}			

\newcommand{\fg}{{\mathfrak g}}

\titleformat{\section}
{\normalfont\bfseries\large\sffamily}{\textsf{{\thesection}}}{1em}{}
\titlelabel{\textsf{{\thetitle}}\quad}

\titleformat{\subsection}
{\normalfont\bfseries\sffamily}{\textsf{{\thesubsection}}}{1em}{}
\titlelabel{\textsf{{\thetitle}}\quad}

\titleformat{\subsubsection}
{\normalfont\bfseries\sffamily}{\textsf{{\thesubsubsection}}}{1em}{}
\titlelabel{\textsf{{\thetitle}}\quad}

\title{Graded Lie superalgebras from embedding tensors}

\author[a]{Sylvain Lavau\thanks{Corresponding author: lavau@math.univ-lyon1.fr}}
\author[b]{Jakob Palmkvist}
\affil[a]{\small \emph{Department of Mathematics, Galatasaray \"Universitesi,  \c{C}\i ra\u{g}an Caddesi No:36, 34349 \.{I}stanbul, Turkey}}
\affil[b]{\small \emph{Mathematical Sciences, Chalmers University of Technology, SE-412 96, G\"oteborg, Sweden}}
\date{\today}

\nobibliography*
\begin{document}

\maketitle

\abstract{We show how various constructions of $\mathbb{Z}$-graded Lie superalgebras
are related to each other.
These Lie superalgebras 
have a Lie algebra $\fg$ as the subalgebra
at degree $0$, an odd $\fg$-module $V$ as the subspace at degree $1$,
and an embedding tensor as an element at degree $-1$.
This is a linear map from $V$ to $\fg$
satisfying a quadratic constraint, which 
equips $V$ with the structure of a Leibniz algebra.}

\tableofcontents

\newpage

\section{Introduction}

Building up on advances on the reduction of eleven-dimensional supergravity 
\cite{CremmerSupergravityTheory1978} to four dimensions through the T-tensor 
\cite{dewit8Supergravity1982-local, dewit8Supergravity1982, dewittEmbeddingGauged81985}, the notion of an
\emph{embedding tensor} was introduced by physicists in order to account for gauging procedures in supergravity \cite{nicolaiMaximalGaugedSupergravity2001, dewitLagrangiansGaugingsMaximal2003, deWit:2005hv, dewitGaugedSupergravitiesTensor2008, dewitEndPformHierarchy2008}. Following this first account, 
the notion has evolved into a cornerstone of research in mathematical physics, offering a powerful approach to gauged supergravity theories, as well as an inspiration for developments in algebra \cite{kotovEmbeddingTensorLeibniz2020, shengControllingLinfinityAlgebra2021, rongNonabelianembedding2023, tengEmbeddingTensors2024, CaseiroEmbeddingTensors2024, teng3Leibniz2025}.

From a mathematical perspective, the {\it tensor hierarchy} that the embedding tensor gives rise to has been understood to carry a graded Lie superalgebra structure. Original accounts of this structure were provided by one the authors \cite{palmkvistTensorHierarchyAlgebra2014, greitzTensorHierarchySimplified2014}, while the other author contributed 
a functorial relationship between embedding tensors and differential graded Lie algebras \cite{lavauLieAlgebraCrossed2023, lavauCorrigendumLieAlgebra2023}. At the time, it was unclear under which conditions these respective graded structures coincide. This paper is dedicated to elucidate those conditions, and to provide a concise and self-contained introduction to the
algebraic key notions at work in the construction.

To be more precise, let $\mathfrak{g}$ be a real or complex Lie algebra, let $V$ be a module over $\mathfrak{g}$,
and let
$\Theta$ be a linear map $\Theta:V\to\mathfrak{g}$ satisfying the condition
\begin{equation}\label{eq:quadratic0}
\Theta(\Theta(u)\cdot v)=[\Theta(u),\Theta(v)]\,
\end{equation}
for any $u,v\in V$, where the dot denotes the action of $\fg$ on $V$. (In the applications to gauged supergravity, the linear map $\Theta$ is the {embedding tensor} which describes how the gauge algebra is embedded in the global symmetry algebra $\fg$.)
Associated to these data, a graded Lie superalgebra $\mathbb{T}$ was canonically constructed in \cite{lavauLieAlgebraCrossed2023, lavauCorrigendumLieAlgebra2023}.
In this graded Lie superalgebra, $\Theta$ is an element sitting at degree $-1$, the Lie algebra $\fg$ is the subalgebra
at degree $0$ and $V$ is the subspace at degree $1$.

Graded Lie superalgebras with the same interpretation of these elements had already been constructed in 
\cite{palmkvistTensorHierarchyAlgebra2014} and called 
\textit{tensor hierarchy algebras}. 
The initial data of this construction did not include a particular element $\Theta \in \mathrm{Hom}(V,\mathfrak{g})$
but a subspace of $\mathrm{Hom}(V,\mathrm{End}\,V)$. 
Obviously, the $\fg$-module structure on $V$ gives rise to a representation $\rho:\fg\to\mathrm{End} \, V$, 
so that the composite linear map $\rho \circ \Theta$ is an element of $\mathrm{Hom}(V,\mathrm{End}\,V)$,
hence enabling a connection between the two approaches.
More generally, given the vector space $V$ and 
a subset $T$ of
$\mathrm{Hom}(V,\mathrm{End}\,V)$ a graded Lie superalgebra $\scr P(V,T)$
can be constructed as in \cite{palmkvistTensorHierarchyAlgebra2014}.
In the original construction, only certain finite-dimensional Lie algebras $\fg$ were considered,
and
in addition to the $\fg$-module structure on $V$, 
the subset $T$ was in \cite{palmkvistTensorHierarchyAlgebra2014} 
also a $\fg$-module
satisfying a 
certain representation constraint, again motivated by applications to gauged supergravity
(more specifically, by supersymmetry in these theories).
There are other constructions of tensor hierarchy algebras, and generalisations
to cases where $\fg$ is infinite-dimensional, originating from the representation constraint in the finite-dimensional cases~\cite{Bossard:2017wxl,carboneGeneratorsRelationsLie2019,cederwallTensorHierarchyAlgebras2020a}.
However, in the present paper we neglect the representation constraint but
focus on the quadratic constraint (\ref{eq:quadratic0})
which has no role in these constructions, and therefore we will not say more about them here.

The main purpose of the present paper is to show how the two constructions above are related to each other.
In particular, we will show
that if $\fg$ is simple, $V$ is faithful and $\Theta\neq0$,
then $\scr P(V,T)$ is isomorphic to $\mathbb{T}$ for a certain $T$ modulo
ideals at degree $3$ and higher.
We also take the opportunity to give a rather detailed account of how our constructions of
graded Lie superalgebras are related
to other ones that have appeared in the literature (such as
prolongations and extensions of a structure called a \textit{local} Lie superalgebra), and how various properties
of these graded Lie superalgebras
are related to each other.

The paper is organised as follows. In Section~\ref{item12} we review the basics of graded Lie superalgebras, in particular
Kac's construction of maximal and a minimal ones from local Lie superalgebras.
We elaborate on these in Section~\ref{sec3} in order
to introduce the universal graded Lie superalgebra of a vector space, as defined by Kantor
\cite{Kantor}. We then explain how this algebra is related to the notion of prolongation, and we conclude this section on defining the graded Lie superalgebra $P(V,T)$ mentioned above. In Section~\ref{sec4} we provide a summary of the construction of the canonical algebra induced by a triple $(\fg,V,\Theta)$, upon factoring out 
ideals of $\mathbb{T}$ concentrated in degrees $3$ and higher. Denoting the resulting graded Lie superalgebra by ${L}=L(\fg,V,\Theta)$, we conclude the section by showing under which condition it is isomorphic to
$P=P(V,T)$ for a certain $T$. 
We conclude Section~\ref{sec4} with several important examples, while Section~\ref{sec5} gives some insights about the interpretation and outcomes of such constructions in potential mathematical applications.

\medskip
\noindent
\textit{\underline{Conventions:}} The ground field is $\mathbb{R}$ or $\mathbb{C}$, when not explicitly specified.
Throughout the paper, the $\mathbb{Z}$-grading 
is reversed compared to standard conventions in mathematics literature, in particular
\cite{lavauLieAlgebraCrossed2023, lavauCorrigendumLieAlgebra2023} (but agrees with
related papers where
tensor hierarchies in physics correspond to positive degrees of a tensor hierarchy algebra).
If $V$ is a $\mathbb{Z}$-graded vector space, the notation $V[\pm1]$  
denotes the vector space $V$ whose degree has been shifted by $\mp1$.
This notation originates from geometry where functions and differential forms are prominent, and is justified by the observation that the dual vector space of $V[1]$ is concentrated in degree $+1$. We take the convention that, to every element  $u\in V$ corresponds an element $u[-1]\in V[-1]$ and $u[1]\in V[1]$. (Above in the introduction, we have disregarded such shifts for the sake of readibility. In the main part of the paper, the corresponding expressions are given with the shifts taken into account.)

\medskip
\noindent
\textit{\underline{Acknowledgments:}} 
This work was initiated in January 2023
during the workshop \textit{Higher Structures, Gravity and Fields} at
the Mainz Institute for Theoretical Physics (MITP)
of the DFG Cluster of Excellence PRISMA* (Project ID 39083149). Significant progress
was made in the spring of 2025 during the program
\textit{Cohomological Aspects of Quantum Field Theory} at the Institut Mittag-Leffler
and in June 2025 during the \textit{5th Mini Symposium on Physics and Geometry} at the
RBI Zagreb. We are grateful to the organisers of all these activities for hospitality and support.
Part of the research was also conducted at the Aristotle University of Thessaloniki and at \"Orebro University,
the previous affiliations of SL and JP, respectively.
Finally, we would like to thank Thomas Basile, Martin Cederwall, Jens Fjelstad, Boris Kruglikov, Dimitry Leites, Henning Samtleben and Andrea Santi for insightful discussions
and helpful answers to questions.

\section{Graded Lie superalgebras} \label{item12}

Most of the definitions in this section
can be found for Lie algebras in \cite{kacSimpleIrreducibleGraded1968} (Section 1.2)
and were generalised to Lie superalgebras in \cite{kacLieSuperalgebras1977} (Section 1.2.2).

All vector spaces and algebras that we consider are graded with respect to both
$\mathbb{Z}$ and $\mathbb{Z}/2\mathbb{Z}$,  and the two gradings are compatible with each other in the sense
that odd elements have odd $\mathbb{Z}$-degree, and even elements have even $\mathbb{Z}$-degree. In the following, we will not recall every time that vector spaces are graded. In particular, 
graded Lie superalgebras with such gradings will be called just Lie superalgebras.

We thus assume a decomposition $U=\bigoplus_{k\in\mathbb{Z}} U_{k}$ for any vector space $U$.
We write $x=\sum_{k\in\mathbb{Z}}x_k$ for any $x\in U$, where $x_k\in U_k$, and denote the $\mathbb{Z}$-degree
by $|x_k|=k$. We also use the notation
$U_{\pm}=\bigoplus_{k=1}^\infty U_{\pm k}$ for the positive and negative parts of $U$,
and $U_{p\pm}=\bigoplus_{k=0}^\infty U_{p\pm k}$ for any $p\in\mathbb{Z}$.
Morphisms preserve gradings, and a subspace of a vector space inherits the grading. In particular, when
we talk about ideals of Lie superalgebras we always mean ideals. 
The tensor product ${U}\otimes{V}$ of two vector spaces ${U}$ and ${V}$
is a vector space with the $\mathbb{Z}$-grading given by
$({U}\otimes{V})_k=\bigoplus_{i+j=k}{U}_i \otimes {V}_j$. 
A morphism of vector spaces $\varphi:U\to V$ can thus be considered as a
family $\varphi=(\varphi_k)_{k\in\mathbb{Z}}$ of linear maps $\varphi_k:U_k\to V_k$.

A \textbf{Lie superalgebra} is a vector space $\scr{G}$ together
with a \textbf{Lie superbracket} on $\scr{G}$, that is to say: a morphism
$[\cdot,\cdot] : \scr{G}\otimes \scr{G} \to \scr{G}$
satisfying antisymmetry and the Jacobi identity,
\begin{align}
[x,y]+(-1)^{xy}[y,x]&=0\,,\nn\\
[x,[y,z]]-(-1)^{xy}[y,[x,z]]&=[[x,y],z]\, \label{antisymandjacobi}
\end{align}
for all $x,y,z\in\scr G$ such that $x$ and $y$ are homogeneous with respect to the $\mathbb{Z}/2\mathbb{Z}$-grading.
Here (and below), when we write $xy$ in the exponent of a power of $(-1)$, we of course mean the product of the
degrees of $x$ and $y$. A morphism of Lie superalgebras from $\scr G$ to $\scr H$ is a
morphism of vector spaces $\varphi:\scr G\to \scr H$ commuting with the respective Lie superbrackets:
\begin{equation}
\varphi\big([x,y]_{\scr G}\big)=\big[\varphi(x),\varphi(y)\big]_{\scr H}.
\end{equation}

Given a Lie superalgebra $G$ and a subset $S$ of $G$, the intersection of all subalgebras
of $G$ including $S$ is the subalgebra of $G$ \textbf{generated} by $S$. If this intersection is $G$
itself, then $G$ is \textbf{generated} by $S$.

\subsection{Local Lie superalgebras}

\begin{defi}
A  vector space $U$ is \textbf{local} if 
$U_k=0$ for $k\notin\{-1,0,1\}$.
The \textbf{local part} $U^\Loc$ of a vector space $U$ is the sum of the subspaces $U_k$
for $k\in\{-1,0,1\}$.
A local morphism is a morphism of local vector spaces,
while the local
part $\varphi^\Loc$ of a morphism of vector spaces $\varphi$ is obtained by restricting the domain and codomain accordingly.
Conversely, $U$ and $\varphi$ are \textbf{global} extensions of $U^\Loc$ and $\varphi^\Loc$,
respectively.
A \textbf{local Lie superalgebra} is a local vector space $\scr{G}^\Loc$ together
with a \textbf{local Lie bracket} on $\scr{G}^\Loc$, that is, a local morphism
$[\cdot,\cdot] : (\scr{G}\otimes \scr{G})^\Loc \to \scr{G}^\Loc$,
satisfying (\ref{antisymandjacobi})
for all $x,y,z\in\scr{G}^\Loc$ such that $x$ and $y$ are homogeneous with respect to the $\mathbb{Z}/2\mathbb{Z}$-grading and all involved brackets are defined. 

Thus a local Lie superalgebra is in fact not an algebra since the bracket is not defined for a pair of
homogeneous elements
of the same degree $\pm1$.
We may talk about subalgebras of local Lie superalgebras
(possibly generated by some subset) in the same way as for Lie superalgebras.

It will be convenient below to generalise
the concept of local Lie superalgebras to \textbf{regional} Lie superalgebras (and related concepts
correspondingly)
by replacing the set $\{-1,0,1\}$ with any integer interval, which we then call the corresponding
\textbf{region}. We say that a regional Lie superalgebra is \textbf{semilocal} if the region is
$\{k\in\mathbb{Z}\,|\,k\leq 1\}$.
\end{defi}

\begin{proposition}\label{freely}$ $
\begin{enumerate}
\item[{\rm(a)}] Any local Lie superalgebra $\scr G^\Loc$
has a unique semilocal extension $\scr G_{1-}$
such that $\scr G_-$ is freely generated by $\scr G_{-1}$.
\item[{\rm(b)}]
Any semilocal Lie superalgebra $\scr G_{1-}$
has a unique global extension $\scr G$
such that $\scr G_{+}$ is freely generated by $\scr G_{1}$.
\end{enumerate}
\end{proposition}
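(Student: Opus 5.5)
We construct the extension explicitly and then argue uniqueness. By the symmetry of the local data under $k\mapsto -k$, part (b) is the same argument as part (a) with degrees reversed, so we describe (a) and indicate the changes for (b).

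\emph{Construction for (a).} Set $\scr G_{-}:=F(\scr G_{-1})$, the free Lie superalgebra on the vector space $\scr G_{-1}$, placed in degree $-1$. Its degrees are $-1,-2,\dots$, and $F(\scr G_{-1})_{-1}=\scr G_{-1}$. Each $x\in\scr G_0$ acts on $\scr G_{-1}$ by $\mathrm{ad}\,x$. By the universal property, this action extends uniquely to a degree-preserving superderivation $D_x$ of $F(\scr G_{-1})$. The assignment $x\mapsto D_x$ is a Lie superalgebra morphism $\scr G_0\to\mathrm{Der}\,F(\scr G_{-1})$. Indeed, $[D_x,D_y]-D_{[x,y]}$ is a derivation vanishing on the generators, by the Jacobi identity of $\scr G^\Loc$, hence it is zero. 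This gives the brackets $\scr G_0\times\scr G_{-k}\to\scr G_{-k}$.

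Next we define, for $a\in\scr G_1$, a map $\delta_a:\scr G_{-}\to\scr G_{0-}$ of degree $+1$. On generators it is $\delta_a(u)=[a,u]\in\scr G_0$ for $u\in\scr G_{-1}$. On products it is given by the derivation rule
\[
\delta_a([u,v])=[\delta_a(u),v]+(-1)^{au}[u,\delta_a(v)],
\]
where a term of the form $[x,v]$ with $x\in\scr G_0$ means $D_x(v)$. We build $\delta_a$ by induction on degree. Since $F(\scr G_{-1})$ is free, we can realise it as a quotient of the tensor algebra, or equivalently define $\delta_a$ on the free Lie superalgebra through its universal enveloping algebra, the tensor algebra $T(\scr G_{-1})$. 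Using the semidirect structure, define $\delta_a$ on $T(\scr G_{-1})$ by the Leibniz rule with values in $\scr G_0\ltimes T$ and check that it preserves the Lie subspace.

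\emph{The main obstacle} is showing that $\delta_a$ is well defined on Lie brackets, i.e. compatible with antisymmetry and the Jacobi relation. The cleanest way is a standard trick. Consider the vector space $\scr G_{1-}=\scr G_1\oplus\scr G_0\oplus F(\scr G_{-1})$ and realise $\scr G_0\oplus\scr G_1$ inside the derivation superalgebra $\mathrm{Der}(F(\scr G_{-1}))$, suitably shifted, modulo a check in low degree. Define the bracket $[a,x]$ for $x\in\scr G_0$ by the local bracket. Then verify the semilocal Jacobi identities by induction on the degree of the negative arguments: each identity holds on generators by the local Jacobi identity, and it propagates to brackets of generators because both sides are derivations in the negative argument. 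Antisymmetry holds by definition.

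\emph{Uniqueness.} Suppose $\scr G'_{1-}$ is another semilocal extension with $\scr G'_-$ free on $\scr G_{-1}$. Then $\scr G'_-\cong F(\scr G_{-1})$ canonically. The brackets with $\scr G_0$ and $\scr G_1$ are forced by the Jacobi identity, which expresses $[x,[u,v]]$ through brackets with lower-degree elements. Induction on degree therefore gives an isomorphism that is the identity on $\scr G^\Loc$.

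(b) is identical, with $\scr G_+:=F(\scr G_1)$. Derivations come from $\scr G_0$, and degree $-1$ maps $\delta_u$, for $u$ in the negative part of $\scr G_{1-}$, are defined inductively. For $u\in\scr G_{-k}$ with $k\ge2$, the map $\delta_u$ lowers degree by $k$ and is determined recursively via $[[u_1,u_2],\cdot]=[u_1,[u_2,\cdot]]\pm[u_2,[u_1,\cdot]]$. Since $\scr G_-$ is not free here, well-definedness requires that this recursion respect the relations of $\scr G_{1-}$. This holds because the assignment $u\mapsto\delta_u$ is a representation of $\scr G_{1-}$ on $\scr G_{1-}\oplus F(\scr G_1)$, a fact verified by induction on positive degree, using only the semilocal Jacobi identity in the base case.
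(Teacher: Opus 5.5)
Your part (a) is sound, but it takes a different route from the paper. The paper presents $G_{-p-1}$ as $\bigoplus_{m+n=-p-1}G_m\wedge G_n$ modulo the Jacobi relations $\mathcal{J}$, and checks by hand that the bracket defined through the Jacobi identity kills $\mathcal{J}$. You instead use the universal property of the free Lie superalgebra.

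Your ``main obstacle'' step is best stated as follows, not as realising $G_0\oplus G_1$ inside $\mathrm{Der}\,F(G_{-1})$: elements of $G_1$ send $G_{-1}$ into $G_0$, outside $F(G_{-1})$. The point is that $G_{0-}=G_0\ltimes F(G_{-1})$ is already a Lie superalgebra, hence an $F(G_{-1})$-module under the adjoint action. So the map $u\mapsto[a,u]$ from $G_{-1}$ to $G_0$ extends uniquely to an odd derivation $\delta_a\colon F(G_{-1})\to G_{0-}$. The identities $[D_x,\delta_a]=\delta_{[x,a]}$ then hold because both sides are derivations agreeing on generators. This packages the paper's computation neatly, but it relies on the target $G_{0-}$ being fully built before any bracket with $G_1$ is needed.

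That is exactly what fails in (b), and there your argument has a genuine gap. Part (b) is not the mirror image of (a): its input is a semilocal algebra whose negative part is arbitrary, and in general not generated by $G_{-1}$. Trivially, $G_{-1}$ may vanish while $G_{-2}\neq0$. More to the point, the paper applies (b) to Kantor's $U_{1-}$, where $U_-$ is not generated by $U_{-1}$; already for $\dim U_1=1$ one finds $[U_{-1},U_{-2}]=0\neq U_{-3}$.

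Consequences for your argument:
\begin{itemize}
\item Your definition of $\delta_u$ for $u\in G_{-k}$, $k\geq2$, ``recursively via $[[u_1,u_2],\cdot\,]$'' is unavailable for elements $u$ that are not sums of brackets. Where it is available, you still owe a proof that it does not depend on the chosen decomposition.
\item The trick from (a) does not transfer. For $z\in G_{-k}$, $\mathrm{ad}_z$ sends $G_1$ into $G_{1-k}$. Extending it as a derivation on $F(G_1)$ needs the brackets $[G_{1-k},F(G_1)]$, which is the very $F(G_1)$-module structure on $G$ you are trying to construct.
\item Your closing assertion that $u\mapsto\delta_u$ ``is a representation'' states the conclusion rather than proving it.
\end{itemize}

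The fix is the paper's: never decompose the element of $G_{0-}$; induct instead on the degree of the free positive argument. For $p\geq1$, define $[z,y]$ for all $z\in G_{0-}$ and $y\in G_{p+1}$ at once by $[z,[v,w]]=[[z,v],w]+(-1)^{zv}[v,[z,w]]$, with $v,w\in G_+$ of degree at most $p$. Every bracket on the right is then already defined, either inside $F(G_1)$ or at an earlier stage. Then check that this vanishes on the Jacobi relations presenting $G_{p+1}$ as a quotient of $\bigoplus_{m+n=p+1}G_m\wedge G_n$. This is the same computation as in (a), with $G_0\oplus G_1$ replaced by $G_{0-}$ and $G_-$ by $G_+$.
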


\begin{proof}
Let $\scr G_-$ be the free Lie superalgebra generated by $\scr G_{-1}$.
Let $p\geq1$ and
suppose 
that $\scr G^\Loc$
has a unique regional extension with region $\{k\in\mathbb{Z}\,|\,-p\leq k\leq 1\}$,
which also extends the regional part of $\scr G_-$ with region $\{k\in\mathbb{Z}\,|\,-p\leq k\leq-1\}$.
We need to extend this regional Lie bracket further with a morphism
\begin{align} \label{bracketpart}
[\cdot,\cdot]\ :\ \scr G_{-p-1}\otimes (\scr G_0 \oplus \scr G_1) \to \scr G_{-p-1}\oplus \scr G_{-p}\,
\end{align}
such that
\begin{align}
[[x,y],u] = [x,[y,u]]-(-1)^{xy}[y,[x,u]] \label{regjacobi}
\end{align}
for any $x,y\in \scr G_{-}$ with $|x|+|y|=-p-1$ and $u\in\scr G_0 \oplus \scr G_1$.

Let $\tilde{\scr G}_{-p-1}$ 
be the direct sum of
$\scr G_{m} \wedge \scr G_{n}$ for all $m,n\leq -1$ such that $m+n=-p-1$
and define a morphism
\begin{align} \label{bracketpartprel}
\llbracket\cdot,\cdot\rrbracket\ :\ \tilde {\scr G}_{-p-1}\otimes (\scr G_0 \oplus \scr G_1) \to \scr G_{-p-1}\oplus \scr G_{-p}\,
\end{align}
by the condition
\begin{align}
\llbracket x \wedge y,u\rrbracket = [x,[y,u]]-(-1)^{xy}[y,[x,u]]
\end{align}
for $x \in \scr G_m$ and $y\in\scr G_n$, where $m,n\leq -1$ and $m+n=-p-1$.

Now there is an isomorphism $\tilde{\scr G}_{-p-1}/\scr J \to \scr G_{-p-1}$ 
where $\scr J$ is the subspace spanned by all elements
\begin{align}
x \wedge [y,z]-(-1)^{xy}v\wedge[x,z]-[x,y]\wedge z\,
\end{align}
such that $|x|,|y|,|z|\leq-1$ and $|x|+|y|+|z|=-p-1$.
If we can show that $\llbracket \scr J, \scr G_0 \oplus \scr G_1 \rrbracket=0$, then
the morphism (\ref{bracketpartprel}) induces a morphism (\ref{bracketpart})
such that the Jacobi identity (\ref{regjacobi}) is satisfied.
Indeed,
\begin{align}
\llbracket [x,y] \wedge z , u\rrbracket
&=[[x,y],[z,u]]-(-1)^{z(x+y)}[z,[[x,y],u]] \nn\\
&=[x,[y,[z,u]]]-(-1)^{z(x+y)}[z,[x,[y,u]]]\nn\\
&\quad\,-(-1)^{xy}[y,[x,[z,u]]]+(-1)^{z(x+y)+xy}[z,[y,[x,u]]]\nn\\
&=[x,[[y,z],u]]+(-1)^{yz}[[x,z],[y,u]]\nn\\
&\quad\,-(-1)^{xy}[y,[[x,z],u]]-(-1)^{x(y+z)}[[y,z],[x,u]]\,\nn\\
&=\llbracket x\wedge[y,z],u\rrbracket-(-1)^{xy}\llbracket y\wedge [x,z],u\rrbracket\,
\end{align}
for any $u\in \scr G_0 \oplus \scr G_1$.
Thus we can unambiguously \textit{define} the morphism (\ref{bracketpart})
by the Jacobi identity (\ref{regjacobi}), and the uniqueness then
follows since the brackets on the right hand side of (\ref{regjacobi})
are unique.
Having proven part (a) by induction,
part (b) can be proven in the same way, replacing $\scr G_0 \oplus \scr G_1$ with $\scr G_{0-}$
and $\scr G_-$ with $\scr G_+$.
\end{proof}

\begin{corollary} \label{maxexistence}
Any local Lie superalgebra $\scr G_{-1} \oplus \scr G_0 \oplus \scr G_1$
has a unique global extension $\scr G$
such that $\scr G_{\pm}$ is freely generated by $\scr G_{\pm1}$. 
\end{corollary}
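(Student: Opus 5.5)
The plan is to apply Proposition~\ref{freely} twice, first part (a) and then part (b), and to check that the freeness obtained in the first step is preserved by the second.

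\emph{Existence.} By Proposition~\ref{freely}(a), the local Lie superalgebra $\scr G_{-1}\oplus\scr G_0\oplus\scr G_1$ has a semilocal extension $\scr G_{1-}$ in which $\scr G_-$ is freely generated by $\scr G_{-1}$. Applying Proposition~\ref{freely}(b) to this $\scr G_{1-}$ gives a global extension $\scr G$ in which $\scr G_+$ is freely generated by $\scr G_1$. In the construction of part (b), only brackets involving positive degrees are added. The subspace $\scr G_{1-}$ is unchanged, and so is the bracket on it, since $\scr G$ extends $\scr G_{1-}$. The negative part $\scr G_-$ of $\scr G$ is therefore still freely generated by $\scr G_{-1}$. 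Hence $\scr G_\pm$ is freely generated by $\scr G_{\pm1}$.

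\emph{Uniqueness.} Let $\scr G'$ be another global extension with $\scr G'_\pm$ freely generated by $\scr G_{\pm1}$. Its semilocal part $\scr G'_{1-}$ is a semilocal extension of the given local Lie superalgebra with $\scr G'_-$ free on $\scr G_{-1}$. By the uniqueness in Proposition~\ref{freely}(a), $\scr G'_{1-}$ coincides with $\scr G_{1-}$. Then $\scr G'$ is a global extension of this semilocal algebra with $\scr G'_+$ free on $\scr G_1$, so the uniqueness in Proposition~\ref{freely}(b) gives $\scr G'=\scr G$.

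\emph{Main point to check.} The only thing needing care is the claim that part (b) does not alter the negative part. The step of part (b) at degree $p+1$ defines only brackets $\scr G_{p+1}\otimes\scr G_{0-}\to\scr G_{p+1}\oplus\scr G_p$, whose targets lie in positive or zero degree. No bracket between negative-degree elements is redefined, so the freeness of $\scr G_-$ established in part (a) survives.
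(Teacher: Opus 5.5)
Your proof is correct and follows the paper, which obtains the corollary directly by combining parts (a) and (b) of Proposition~\ref{freely}, with uniqueness inherited from the uniqueness in each part. One small slip in your last paragraph: in part (b) the new brackets $[\scr G_{p+1},\scr G_{-k}]$ land in $\scr G_{p+1-k}$, which is negative for $k>p+1$, so the targets are not confined to non-negative degrees. This is harmless, because all that matters is that part (b) changes neither the subspace $\scr G_{1-}$ nor the bracket on $\scr G_-\otimes\scr G_-$, which you already note in your existence paragraph.
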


\begin{proof}
This follows from part (a) and (b) of Proposition~\ref{freely} together.
\end{proof}
\noindent
An alternative proof of Corollary~\ref{maxexistence} (for Lie algebras)
is given in \cite{kacSimpleIrreducibleGraded1968} (as part of the proof of Proposition~4).

\subsection{Maximality and minimality}

\begin{defi}
A Lie superalgebra $\scr G$ is \textbf{locally generated} if 
it is generated by its local part $\scr G^\Loc$.
A locally generated Lie superalgebra
$\scr G$ is \textbf{maximal} (or a \textbf{maximal extension} of its local part)
if
any isomorphism  of local Lie superalgebras 
$\scr G^\Loc\to\scr H^\Loc$, where $\scr H$ is a locally generated Lie superalgebra,
extends to morphism $\scr G\to \scr H$.
It is \textbf{minimal} (or a \textbf{minimal extension} of its local part) if
any local isomorphism
$\scr H^\Loc\to\scr G^\Loc$, where $\scr H$ is a locally generated Lie superalgebra,
extends to a morphism $\scr H\to \scr G$.
\end{defi}

\noindent
The following lemma says that we can without loss of generality assume that the morphisms
$\scr G \to \scr H$ and $\scr H \to \scr G$ in the above definitions are surjective.

\begin{lemma} \label{surj}
Let $\scr G$ and $\scr H$ be locally generated Lie superalgebras  
and
let $\varphi\colon\scr G \to \scr H$ be a morphism whose local part is surjective. Then $\varphi$ itself is surjective too.
\end{lemma}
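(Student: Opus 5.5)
The plan is to show that the image $\varphi(\scr G)$ is a subalgebra of $\scr H$ containing $\scr H^\Loc$, and then to invoke that $\scr H$ is locally generated.

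First, since $\varphi$ is a morphism of Lie superalgebras, its image $\varphi(\scr G)$ is a subalgebra of $\scr H$. Because $\varphi$ preserves the $\mathbb{Z}$-grading, this image is a graded subspace. Second, the hypothesis that $\varphi^\Loc$ is surjective says precisely that $\varphi(\scr G_k)=\scr H_k$ for $k\in\{-1,0,1\}$. Hence $\scr H^\Loc\subseteq\varphi(\scr G)$.

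Since $\scr H$ is locally generated, the smallest subalgebra of $\scr H$ containing $\scr H^\Loc$ is $\scr H$ itself. This subalgebra is contained in $\varphi(\scr G)$, so $\varphi(\scr G)=\scr H$.

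If one prefers an explicit description, note that $\scr H$ is spanned by iterated brackets $[h_1,[h_2,\dots,[h_{n-1},h_n]\dots]]$ of homogeneous elements $h_i\in\scr H^\Loc$. The span of these iterated brackets is closed under the bracket by the Jacobi identity and induction on length. Choosing preimages $g_i\in\scr G^\Loc$ with $\varphi(g_i)=h_i$, each such bracket equals $\varphi([g_1,[g_2,\dots]])$.

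Local generation of $\scr G$ is not actually needed here; only that of $\scr H$ is used. I expect no real obstacle. The only point requiring care is that the image of a morphism is closed under the bracket, which follows directly from the defining property $\varphi([x,y])=[\varphi(x),\varphi(y)]$.
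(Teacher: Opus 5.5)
Your proof is correct, and it is organised differently from the paper's. The paper argues by induction on the degree. Assuming $\varphi(\scr G_{\pm k})=\scr H_{\pm k}$ for $k\le p$, it writes every element of $\scr H_{\pm(p+1)}$ as a sum of brackets $[\varphi(x_{\pm1}),\varphi(y_{\pm p})]=\varphi[x_{\pm1},y_{\pm p}]$. That step uses without comment the fact that a locally generated Lie superalgebra satisfies $\scr H_{\pm(p+1)}=[\scr H_{\pm1},\scr H_{\pm p}]$, i.e.\ that $\scr H_{\pm}$ is generated by $\scr H_{\pm1}$ alone. This is true: one checks that $\scr H_0$ together with the subalgebras generated by $\scr H_{-1}$ and by $\scr H_1$ spans a subalgebra. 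It is not automatic, though, since an element of positive degree could a priori involve brackets with generators of degree $-1$.

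Your argument avoids this fact entirely. The image of a morphism is a graded subalgebra that contains $\scr H^\Loc$. By the paper's definition of ``generated'' as an intersection of subalgebras, it must therefore be all of $\scr H$. This is shorter, and, as you note, it makes transparent that only local generation of $\scr H$ is needed.

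What the paper's degree-by-degree formulation buys is a template that it reuses immediately in the proof of Proposition~\ref{propminimal}. There, injectivity is established by the same kind of induction on the degree, via $[x_{\pm1},y_{\pm k}]$.
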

\begin{proof} 
Let $p\geq1$ and suppose $\varphi(\scr G_{\pm k})=\scr H_{\pm k}$ for any $k=1,\ldots,p$. Then any element 
in $\scr H_{\pm(p+1)}$
is a sum of terms
\begin{equation}
[\varphi(x_{\pm 1}),\varphi(y_{\pm p})]=\varphi[x_{\pm 1},y_{\pm p}]
\end{equation}
and the lemma follows by induction.
\end{proof}

\begin{proposition}\label{propminimal}
Maximal or minimal
extensions of a local Lie superalgebra are unique.
\end{proposition}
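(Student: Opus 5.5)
We argue with the universal properties in the definition of maximal and minimal extensions, together with Lemma~\ref{surj}. Suppose $\scr G$ and $\scr H$ are both maximal extensions of the same local Lie superalgebra, so there is a local isomorphism $\psi\colon\scr G^\Loc\to\scr H^\Loc$. By maximality of $\scr G$, applied to $\psi$, we get a morphism $\varphi\colon\scr G\to\scr H$ extending $\psi$. By maximality of $\scr H$, applied to $\psi^{-1}$, we get a morphism $\varphi'\colon\scr H\to\scr G$ extending $\psi^{-1}$. Since the local parts of $\varphi$ and $\varphi'$ are isomorphisms, and in particular surjective, Lemma~\ref{surj} shows that both $\varphi$ and $\varphi'$ are surjective.

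Next we show that $\varphi$ is injective, which makes it an isomorphism extending $\psi$. Consider the composite $\varphi'\circ\varphi\colon\scr G\to\scr G$. It is a morphism of Lie superalgebras whose local part is the identity. Since $\scr G$ is locally generated, a morphism is determined by its values on $\scr G^\Loc$. Indeed, the set on which $\varphi'\circ\varphi$ agrees with $\mathrm{id}_{\scr G}$ is a subalgebra containing $\scr G^\Loc$, hence it is all of $\scr G$. So $\varphi'\circ\varphi=\mathrm{id}_{\scr G}$, and $\varphi$ is injective. Combined with surjectivity, $\varphi$ is an isomorphism $\scr G\to\scr H$, which proves uniqueness of maximal extensions up to isomorphism compatible with the identification of local parts. (By the same argument $\varphi\circ\varphi'=\mathrm{id}_{\scr H}$, so $\varphi'$ is the inverse.)

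The minimal case is the dual argument. If $\scr G$ and $\scr H$ are minimal extensions with a local isomorphism $\psi\colon\scr H^\Loc\to\scr G^\Loc$, minimality of $\scr G$ extends $\psi$ to a morphism $\scr H\to\scr G$. Minimality of $\scr H$ extends $\psi^{-1}$ to a morphism $\scr G\to\scr H$. Local generation again forces both composites to be identities, so $\scr G\cong\scr H$.

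The only point that needs care is the claim that a morphism out of a locally generated superalgebra is determined by its local part. We handle it as above: the equaliser of two morphisms is a subalgebra, because morphisms preserve brackets and gradings. If we prefer an induction in the style of Lemma~\ref{surj}, we note that every element of degree $\pm(p+1)$ is a sum of brackets $[x_{\pm1},y_{\pm p}]$. The existence of maximal extensions (Corollary~\ref{maxexistence}) is not needed here, only uniqueness.
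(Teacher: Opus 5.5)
Your proof is correct and follows essentially the same route as the paper. You obtain morphisms in both directions from the universal property, then use local generation to force both composites to be identities. Your equaliser-subalgebra argument is a cleaner substitute for the paper's degree-by-degree induction, and since it also gives $\varphi\circ\varphi'=\mathrm{id}_{\scr H}$, it makes the appeal to Lemma~\ref{surj} unnecessary.
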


\begin{proof} 
Let $\scr G$ and $\scr H$ be both maximal or both minimal
extensions of a local Lie superalgebra $\scr G^\Loc$. Then there are
morphisms
$\varphi:\scr G\to\scr H$ and $\chi:\scr H\to\scr G$ whose local parts are identity maps.
According to Lemma~\ref{surj}, they are surjective. 
Similarly to the proof of Lemma~\ref{surj}, it then follows by induction that $\varphi$ 
is also injective, with inverse $\chi$.
Indeed, let $p\geq1$ and suppose $\chi\big(\varphi(x_{\pm k})\big)=x_{\pm k}$
for any $x\in \scr G$ and $k=1,\ldots,p$.
Then
\begin{align}
\chi\big(\varphi[x_{\pm 1},y_{\pm k}]\big)=\chi[\varphi(x_{\pm 1}),\varphi(y_{\pm k})]=[\chi\big(\varphi(x_{\pm 1})\big),
\chi\big(\varphi(y_{\pm k})\big)]=[x_{\pm 1},y_{\pm k}]
\end{align}
and by induction this holds for all $k\geq1$. Finally, it easily follows by induction that this isomorphism
must be the identity map.
\end{proof}

\begin{proposition}\label{charmaxmin} Let $\scr G^\Loc$ be a local Lie superalgebra. Let $\scr G$ be the 
unique global extension of $\scr G^\Loc$
such that $\scr G_{\pm}$ is freely generated by $\scr G_{\pm1}$
(which exists according to Propositions~\ref{freely} and \ref{propminimal}),
and let $\scr I$ be the sum
of all ideals of $\scr G$ intersecting $\scr G^\Loc$ trivially.
\begin{enumerate}
\item[{\rm(a)}] The Lie superalgebras $\scr G$ and $\scr G/\scr I$ are maximal and minimal
extensions of $\scr G^\Loc$, respectively. 
\item[{\rm(b)}] A locally generated Lie superalgebra $\scr H$
is minimal if and only if it has no non-trivial ideal intersecting
the local part trivially.
\end{enumerate}
\end{proposition}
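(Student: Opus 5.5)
We handle the two parts of (a) first and then deduce (b). For maximality of $\scr G$, let $\scr H$ be a locally generated Lie superalgebra and $\psi^\Loc:\scr G^\Loc\to\scr H^\Loc$ a local isomorphism. Since $\scr G_-$ is freely generated by $\scr G_{-1}$, the linear map $\psi_{-1}$ extends uniquely to a morphism $\psi_-:\scr G_-\to\scr H_-$, and likewise $\psi_+:\scr G_+\to\scr H_+$. Together with $\psi_0$ this gives a morphism of vector spaces $\psi:\scr G\to\scr H$. It remains to show that $\psi$ preserves all brackets, not only those inside $\scr G_\pm$.

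To do this we check $\psi[x,y]=[\psi x,\psi y]$ for $x\in\scr G_{m}$, $y\in\scr G_n$ by induction on $|m|+|n|$. The local cases hold by assumption, and the cases with $m,n$ of the same sign hold by construction. For $m<0$, $m\le -2$, write $x$ as a sum of $[a,b]$ with $a\in\scr G_{-1}$, $b\in\scr G_{m+1}$. Applying the Jacobi identity in both $\scr G$ and $\scr H$ reduces the claim to brackets of strictly smaller total absolute degree. The same argument works for $y$ of positive degree. This is the inductive step already used in Proposition~\ref{freely}, and I expect it to be the main bookkeeping obstacle. In particular, one must order the induction so that the brackets $[b,y]$ and $[a,[b,y]]$ are already covered; inducting on $|m|+|n|$ does this, since $|m+1|+|n|$ and $1+|m+1+n|$ are both smaller.

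For minimality of $\scr G/\scr I$, first note that $\scr I$ is an ideal, being a sum of ideals, and meets $\scr G^\Loc$ trivially. This holds because $\scr I$ is graded, and a sum of graded ideals with zero degree $-1,0,1$ components has zero such components. So $\scr G/\scr I$ has local part $\scr G^\Loc$ and is locally generated. Now let $\scr H$ be locally generated with a local isomorphism $\scr H^\Loc\to(\scr G/\scr I)^\Loc$. By the maximality just proved, applied to the inverse local isomorphism composed appropriately, we get a morphism $\varphi:\scr G\to\scr H$ extending the identification. By Lemma~\ref{surj}, $\varphi$ is surjective, and $\ker\varphi$ is an ideal meeting $\scr G^\Loc$ trivially, so $\ker\varphi\subseteq\scr I$. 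Hence $\scr H\cong\scr G/\ker\varphi$ surjects onto $\scr G/\scr I$, and this surjection extends the given local map.

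For (b), first suppose $\scr H$ is minimal. By Proposition~\ref{propminimal}, $\scr H\cong\scr G/\scr I$. Any ideal $\scr K$ of $\scr G/\scr I$ meeting the local part trivially pulls back to an ideal of $\scr G$ meeting $\scr G^\Loc$ trivially. That pullback lies in $\scr I$, so $\scr K=0$. Conversely, suppose $\scr H$ has no such nonzero ideal. Maximality gives a surjection $\scr G\to\scr H$ whose kernel meets the local part trivially and hence lies in $\scr I$. We therefore obtain a surjection $\scr H\to\scr G/\scr I$ that is an isomorphism on local parts. Its kernel is an ideal of $\scr H$ meeting $\scr H^\Loc$ trivially, so it vanishes. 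Thus $\scr H\cong\scr G/\scr I$ is minimal.
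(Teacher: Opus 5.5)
Your proposal is correct and follows essentially the same argument as the paper: you extend $\psi$ through the free parts $\scr G_\pm$ and check the mixed brackets by induction, you obtain minimality of $\scr G/\scr I$ from $\ker\varphi\subseteq\scr I$ together with Lemma~\ref{surj}, and you derive (b) from the maximality of $\scr I$, the uniqueness of minimal extensions, and the induced surjection $\scr H\to\scr G/\scr I$. One slip is harmless: $1+|m+1+n|$ equals rather than is less than $|m|+|n|$ when $n=0$, but then $[a,[b,y]]$ lies in $\scr G_-$ and is covered by construction; in (b) you also tacitly take $\scr G$ to be built from $\scr H^\Loc$, as the paper does.
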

\begin{proof}
Let $\scr H$ be a locally generated Lie superalgebra
with $\scr H^\Loc =\psi(\scr G^\Loc)$ for some local
Lie superalgebra isomorphism $\psi:\scr G^\Loc \to \scr H^\Loc$. Then,
since $\scr G_\pm$ is freely generated by $\scr G_{\pm1}$,
there are morphisms $\varphi_\pm:\scr G_\pm \to \scr H_\pm$
such that $\varphi_{\pm}(x_{\pm1})=\psi(x_{\pm1})$
for any $x\in\scr G$. We set $\varphi_0(x_0)=\psi(x_0)$ and let
$\varphi:\scr G \to\scr H$ be the linear map
given by $\varphi(x)=\varphi_\pm(x)$ for $x\in\scr G_\pm$ and $\varphi(x)=\varphi_0(x)=\psi(x)$ for $x\in\scr G_0$.
It is then easy to show by induction that $\varphi[x,y]=[\varphi(x),\varphi(y)]$ not only when
$x$ and $y$ belong to the same subalgebra $\scr G_\pm$ but also when $x\in\scr G_{0\pm}$ and $y \in\scr G_{0\mp}$.
Thus $\varphi$ is a morphism whose local part is $\psi$. This proves that $\scr G$ is maximal.

In order to show that $\scr G/\scr I$ is minimal, let $\kappa:\scr H^\Loc \to (\scr G/\scr I)^\Loc$ be a local
Lie superalgebra isomorphism. 
It induces a local
Lie superalgebra isomorphism $\scr H^\Loc \to \scr G^\Loc$
with inverse $\psi:\scr G^\Loc\to\scr H^\Loc$
such that $\kappa \circ \psi: \scr G^\Loc \to (\scr G/\scr I)^\Loc$ is the natural morphism.
Extend $\psi$ to $\varphi:\scr G \to\scr H$ as above. Then, since $\ker{\varphi}$ is included in
$\scr I$, there is a morphism
$\chi: \scr H \to \scr G/\scr I$ such that $\chi \circ \varphi : 
\scr G \to (\scr G/\scr I)$ is the natural morphism, and its local part is $\kappa$.
Thus $\scr G/\scr I$ is minimal and we have proven part (a) of the proposition.

Since $\scr I$ is the maximal ideal
of $\scr G$ intersecting the local part trivially, there is no non-trivial such ideal of $\scr G/\scr I$.
Furthermore, since this is the unique (according to Proposition~\ref{propminimal}) minimal extension 
of the local Lie superalgebra $(\scr G/\scr I)^\Loc\simeq \scr G^\Loc$, and $\scr G^\Loc$ was arbitrary,
there is no nontrivial such ideal of any
minimal Lie superalgebra.
Conversely, suppose $\scr H$ has no non-trivial ideal intersecting the local part trivially,
and let $\chi$ be a morphism $\chi:\scr H \to \scr G/\scr I$ such that the local part is an isomorphism.
Then $\ker(\chi)$ must be zero and $\chi$ is an isomorphism, so that $\scr H$ is minimal. This proves 
part (b) of the proposition.
\end{proof}

\subsection{Transitivity}

\begin{defi} Let $m,n$ be integers.
A Lie superalgebra $\scrG$ is \textbf{negatively $n$-transitive} if, for any 
$x\in\scr G_{n+}$, it follows from $[ \scrG_{-},x]=0$ that $x=0$.
It is \textbf{positively $m$-transitive}
if, for any 
$x\in\scr G_{m-}$,
it follows from $[ \scrG_{+},x]=0$ that $x=0$. It is $(m,n)$-\textbf{transitive}
if it is both positively $m$-transitive and negatively $n$-transitive.
\end{defi}

\begin{remark}
We are usually most interested in the cases where $m$ is non-positive and $n$ is non-negative, but the definitions
make sense and the results below are valid also otherwise.
\end{remark}

\begin{remark}
If $\scrG$ is locally generated, then
$[ \scrG_{\pm},x]=0$ is equivalent to $[ \scrG_{\pm1},x]=0$.
\end{remark}

\begin{remark}
The terms \emph{transitive} and \emph{bitransitive} usually means what we here call \emph{negatively $0$-transitive}
and \emph{$(0,0)$-transitive}, respectively, for example in \cite{kacLieSuperalgebras1977,scheunert}.
However, in \cite{kacSimpleIrreducibleGraded1968},
the term \emph{transitive} means \emph{$(0,0)$-transitive}. In \cite{Kantor},
a graded Lie superalgebra is said to be \emph{of type A} if it is negatively $0$-transitive
and its negative part is locally generated. It is said to be \emph{of type A$'$} if in addition
also its positive part is locally generated. We have chosen the terminology here to avoid any possible confusion,
and since the $\mathbb{Z}$-grading in our constructions is reversed compared to 
corresponding constructions in the literature.
\end{remark}
\noindent
The definition implies the following straightforward result:
\begin{lemma}\label{lemmabi}
Any $(m,n)$-transitive Lie superalgebra is also $(p,q)$-transitive, for any $p\leq m$ and $q\geq n$. 
\end{lemma}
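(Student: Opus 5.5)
The claim follows directly from the definitions, because the conditions get weaker as the subspaces they range over get smaller. We treat the two halves of $(m,n)$-transitivity separately and then combine them.

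\emph{Negative part.} Suppose $\scr G$ is negatively $n$-transitive and let $q\geq n$. Since $q\geq n$, every degree $k\geq q$ also satisfies $k\geq n$, so $\scr G_{q+}=\bigoplus_{k\geq0}\scr G_{q+k}$ is a subspace of $\scr G_{n+}=\bigoplus_{k\geq0}\scr G_{n+k}$. Take any $x\in\scr G_{q+}$ with $[\scr G_-,x]=0$. Then $x$ also lies in $\scr G_{n+}$ and satisfies the same condition, so negative $n$-transitivity gives $x=0$. Hence $\scr G$ is negatively $q$-transitive.

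\emph{Positive part.} The argument is symmetric. For $p\leq m$ we have $\scr G_{p-}\subseteq\scr G_{m-}$. Any $x\in\scr G_{p-}$ with $[\scr G_+,x]=0$ is therefore an element of $\scr G_{m-}$ annihilated by $\scr G_+$, and positive $m$-transitivity forces $x=0$.

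\emph{Conclusion.} An $(m,n)$-transitive Lie superalgebra is, by definition, positively $m$-transitive and negatively $n$-transitive. By the two steps above it is positively $p$-transitive and negatively $q$-transitive, that is, $(p,q)$-transitive.

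The only point needing care is the direction of the inclusions. The notation $\scr G_{p\pm}$ means the sum of $\scr G_{p\pm k}$ over $k\geq 0$. So $\scr G_{q+}$ consists of degrees $\geq q$, and $\scr G_{p-}$ consists of degrees $\leq p$, which is what makes the inclusions hold for $q\geq n$ and $p\leq m$.
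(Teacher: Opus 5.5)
Your proof is correct and is exactly the argument the paper has in mind: it gives no separate proof and only notes that the lemma follows directly from the definition. The inclusions $\scr G_{q+}\subseteq\scr G_{n+}$ for $q\geq n$ and $\scr G_{p-}\subseteq\scr G_{m-}$ for $p\leq m$ make the transitivity conditions weaker, which is all that is needed.
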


\noindent From it, we have the following general result:
\begin{proposition}\label{propbi}
A Lie superalgebra $\scr G$
is $(m,n)$-transitive if and only if it 
it has no non-trivial ideal
included in $\scr G_{m-}$ or $\scr G_{n+}$.
\end{proposition}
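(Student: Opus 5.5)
The proof splits into the two directions. In both, the key observation is that a negative-degree bracket lowers the degree and a positive-degree bracket raises it. I treat the statement about $\scr G_{n+}$ and negative $n$-transitivity; the statement about $\scr G_{m-}$ and positive $m$-transitivity is the mirror image under reversing the $\mathbb{Z}$-grading (swap $\scr G_+\leftrightarrow\scr G_-$ and minimal$\leftrightarrow$maximal degrees).

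\emph{Transitive implies no such ideal.} Suppose $\scr I\neq 0$ is an ideal with $\scr I\subseteq \scr G_{n+}$. Since ideals are graded by convention, $\scr I$ contains a nonzero homogeneous element. Its degrees are bounded below by $n$, so I pick a nonzero homogeneous $x\in\scr I_k$ with $k\geq n$ minimal. Then $[\scr G_-,x]\subseteq \scr I$ lies in degrees $<k$, so $[\scr G_-,x]=0$ by minimality. Since $x\in\scr G_{n+}$ and $x\neq0$, this contradicts negative $n$-transitivity. Symmetrically, an ideal inside $\scr G_{m-}$ has a nonzero homogeneous element of maximal degree, which is annihilated by $\scr G_+$.

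\emph{No such ideal implies transitive.} Suppose $0\neq x\in\scr G_{n+}$ with $[\scr G_-,x]=0$. The bracket with a homogeneous element shifts all degrees uniformly, so each homogeneous component of $x$ is also annihilated by $\scr G_-$. I may therefore assume $x\in\scr G_k$ with $k\geq n$. Let $\scr J$ be the span of all iterated brackets
\begin{equation*}
[y_1,[y_2,\ldots[y_r,x]\ldots]],\qquad r\geq0,\ y_i\in\scr G_{0+}\ \text{homogeneous}.
\end{equation*}
Then $\scr J\subseteq\scr G_{k+}\subseteq\scr G_{n+}$, $\scr J$ is graded, $\scr J\ni x\neq0$, and $\scr J$ is closed under $\operatorname{ad}\scr G_{0+}$ by construction. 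It remains to check $[\scr G_-,\scr J]\subseteq\scr J$, after which $\scr J$ is a nontrivial ideal in $\scr G_{n+}$, a contradiction.

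\emph{The main step: closure under $\operatorname{ad}\scr G_-$.} I prove by induction on $r$ that $[z,w]\in\scr J$ for every homogeneous $z\in\scr G_-$ and every bracket $w$ of length $r$ as above. The base case $r=0$ is $[z,x]=0$. For the inductive step, write $w=[y_1,w']$ with $w'$ of length $r-1$. The Jacobi identity gives
\begin{equation*}
[z,[y_1,w']]=[[z,y_1],w']+(-1)^{zy_1}[y_1,[z,w']].
\end{equation*}
In the second term, $[z,w']\in\scr J$ by induction, and $\scr J$ is $\operatorname{ad}y_1$-stable, so this term lies in $\scr J$. For the first term, I split $[z,y_1]=a+b$ with $a\in\scr G_-$ and $b\in\scr G_{0+}$, each homogeneous. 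Then $[b,w']\in\scr J$ by definition of $\scr J$, and $[a,w']\in\scr J$ by the induction hypothesis applied to $a\in\scr G_-$. This closes the induction.

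The one delicate point is organising the induction so that it is on the length $r$ and quantifies over all $z\in\scr G_-$. This is needed because $[z,y_1]$ may have a negative component, to which the hypothesis must be applied at the same smaller length. With the induction set up this way, no further input beyond the Jacobi identity is needed.
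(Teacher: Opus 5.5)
Your proof is correct and follows the paper's argument. In one direction, a nonzero homogeneous element of minimal degree in an ideal inside $G_{n+}$ is annihilated by $G_-$; conversely, the $G_{0+}$-module generated by an element annihilated by $G_-$ is a nonzero ideal inside $G_{n+}$. The only differences are minor: you contradict negative $n$-transitivity directly rather than passing through negative $q$-transitivity and Lemma~\ref{lemmabi}, and you supply the reduction to homogeneous $x$ and the induction on bracket length that the paper leaves as ``easy to see''.
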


\begin{proof} We prove the statement for
ideals included in $\scr G_{n+}$
as the same argument works for
ideals included in $\scr G_{m-}$.
Suppose that there is an ideal $\scr D$
included in $\scr G_{n+}$. Let $q\geq n$ be the smallest integer such that 
$\scr D_{q} \neq 0$. Then there is a nonzero $x \in \scr D_{q}$ such that $[\scrG_{-},x]=0$,
so $\scrG$ is not negatively $q$-transitive. By contraposition of Lemma \ref{lemmabi}, it is also
not negatively $n$-transitive. 

Conversely, for some $n$, suppose that $\scr G$
is not negatively $n$-transitive and
let $x$ be a nonzero element in $\scr G_{n+}$ such that 
$[\scrG_{-},x]=0$. Then it is easy to see that the $\scr G_{0+}$-module generated by $x$ is a non-trivial ideal of 
$\scr G$ included in $\scr G_{n+}$.
\end{proof}

\begin{corollary}\label{propbimin}
A locally generated Lie superalgebra $\scr G$ 
is minimal if and only if it is $(-2,2)$-transitive.
In particular, any $(0,0)$-transitive Lie superalgebra $\scr G$ is minimal.
\end{corollary}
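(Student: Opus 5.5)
The plan is to combine Proposition~\ref{charmaxmin}(b) with Proposition~\ref{propbi} applied with $(m,n)=(-2,2)$. The second statement then follows from Lemma~\ref{lemmabi}.

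By Proposition~\ref{charmaxmin}(b), a locally generated $\scr G$ is minimal if and only if it has no non-trivial ideal $\scr D$ with $\scr D\cap\scr G^\Loc=0$. By Proposition~\ref{propbi}, $\scr G$ is $(-2,2)$-transitive if and only if it has no non-trivial ideal included in $\scr G_{-2-}$ or in $\scr G_{2+}$. So it suffices to show the following: $\scr G$ has a non-trivial ideal meeting $\scr G^\Loc$ trivially if and only if it has a non-trivial ideal contained in $\scr G_{-2-}$ or in $\scr G_{2+}$.

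The direction from right to left is immediate, because an ideal inside $\scr G_{-2-}$ or $\scr G_{2+}$ meets $\scr G^\Loc$ trivially.

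For the converse, let $\scr D\neq0$ be an ideal with $\scr D\cap\scr G^\Loc=0$. Since $\scr D$ is graded, $\scr D_k=0$ for $k\in\{-1,0,1\}$. I split $\scr D=\scr D_{2+}\oplus\scr D_{-2-}$ and check that each summand is an ideal. Take $x\in\scr D_k$ with $k\geq2$. Then $[\scr G_{\pm1},x]\subset\scr D_{k\pm1}$, and $k\pm1\geq1$, so this lands in $\scr D_{2+}$ (the degree-$1$ component being zero). Because $\scr G$ is locally generated, brackets with $\scr G^\Loc$ determine stability under all of $\scr G$: by the Jacobi identity, the set of $y$ with $[y,\scr D_{2+}]\subset\scr D_{2+}$ is a subalgebra containing $\scr G^\Loc$. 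The same argument works for $\scr D_{-2-}$. At least one of the two summands is nonzero, which gives the required ideal.

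For the last statement, Lemma~\ref{lemmabi} shows that $(0,0)$-transitivity implies $(-2,2)$-transitivity. Minimality then follows from the first part, which I read as applying to locally generated $\scr G$, the hypothesis under which minimality is defined.

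The only point needing care is the splitting step, namely showing that the homogeneous pieces of $\scr D$ on each side of the local part form ideals. This rests on the vanishing of $\scr D$ in degrees $0$ and $\pm1$ together with local generation.
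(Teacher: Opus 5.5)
Your proposal is correct and follows the same route as the paper: the paper also combines Proposition~\ref{charmaxmin}(b) with Proposition~\ref{propbi} for $(m,n)=(-2,2)$, and then uses Lemma~\ref{lemmabi} for the $(0,0)$-transitive case. The one difference is that you make explicit a step the paper leaves unstated: an ideal $D$ meeting the local part trivially splits, by local generation, into the two ideals $D_{2+}$ and $D_{-2-}$, so one of them is a nontrivial ideal contained in $G_{2+}$ or in $G_{-2-}$.
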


\begin{proof}
This is an application of Proposition \ref{propbi} to a minimal Lie superalgebra, as characterised in Proposition~\ref{charmaxmin}.
The second statement then follows by Lemma \ref{lemmabi}, which says
that any $(0,0)$-transitive Lie superalgebra is also $(-2,2)$-transitive.\end{proof}

\noindent
The latter statement in Corollary \ref{propbimin} is part (a) of Proposition~5 in
\cite{kacSimpleIrreducibleGraded1968}.

\section{Kantor's construction and prolongation}\label{sec3}

\subsection{The universal graded Lie superalgebra associated to a vector space} \label{uglsa}

Everywhere in Section~\ref{uglsa}--\ref{redprol}, we let
$\scrU_1$ be a vector space which is concentrated in degree $1$,
and thereby odd.
We extend $\scrU_1$ to a semilocal vector space
$\scrU_{1-}$ recursively by
\begin{align} \label{defUsubspaces}
\scrU_{-p+1}=\Hom(\scrU_1,\scrU_{-p+2})\simeq \Hom(({\scrU}_1)^{\otimes p},{\scrU}_1)\,
\end{align}
for $p=1,2,\ldots$. In particular, $\scrU_0=\End\,{\scrU_1}$.

\begin{proposition}\label{prop:semilocal}
The vector space $\scr U_{1-}$ together with the 
morphism
\begin{align}
[\cdot,\cdot]\ :\ (\scr U_{1-}\otimes \scr U_{1-})_{1-} \to \scr U_{1-}
\end{align}
defined recursively by 
\begin{align} \label{recbase}
[x, u]  &= x(u)\,, &[u,x]  &=- (-1)^xx(u)\,,
&[x , y](u) &= [x ,y(u)]
+ (-1)^{y} [x(u) , y]
\end{align}
for any $x,y\in \scr U_{0-}$ and $u\in \scr U_1$,
is a semilocal Lie superalgebra.
\end{proposition}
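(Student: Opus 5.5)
We have to check three things: that the recursion in (\ref{recbase}) actually defines a bracket on all of $(\scr U_{1-}\otimes\scr U_{1-})_{1-}$, that it is graded antisymmetric, and that the Jacobi identity holds whenever all brackets involved are defined. All three are proved by induction on the total degree, which is bounded above by $1$, so the induction runs downwards: first $|x|+|y|=1$, then $0$, $-1$, and so on.

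\textbf{Well-definedness.} Let $x\in\scr U_{-p}$ and $y\in\scr U_{-q}$ with $p,q\geq0$. Then $[x,y]$ must lie in $\scr U_{-p-q}=\Hom(\scr U_1,\scr U_{-p-q+1})$, and the recursion defines it as that map. The brackets $[x,y(u)]$ and $[x(u),y]$ on its right-hand side have total degree $-p-q+1$. By induction on $p+q$ they are already defined, except in the two cases where one argument lies in $\scr U_1$:
\begin{itemize}
\item if $q=0$, then $y(u)\in\scr U_1$, so $[x,y(u)]=x(y(u))$ by the first rule in (\ref{recbase});
\item if $p=0$, then $x(u)\in\scr U_1$, so $[x(u),y]=-(-1)^y y(x(u))$ by the second rule.
\end{itemize}
The base case is the bracket between $\scr U_{0-}$ and $\scr U_1$, which is given directly by (\ref{recbase}). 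Since elements of $\scr U_1$ are odd, the first two formulas are already consistent with antisymmetry: $[u,x]=-(-1)^{xu}[x,u]$ with $u$ odd. Linearity of $[x,y]$ in $u$ is immediate from the formula.

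\textbf{Antisymmetry on $\scr U_{0-}$.} To show $[x,y]=-(-1)^{xy}[y,x]$ for $x,y\in\scr U_{0-}$, I will evaluate both sides on an arbitrary $u\in\scr U_1$. This is enough because two elements of $\scr U_{-p-q}$ are equal exactly when they agree as maps on $\scr U_1$. Expanding both sides with the recursion and applying the inductive hypothesis (antisymmetry at higher total degree) to the terms $[y,x(u)]$ and $[y(u),x]$ reduces the claim to a sign check. Here one uses that $u$ is odd, so $|x(u)|=|x|+1$.

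\textbf{Jacobi identity.} The key principle is the same: an element $z\in\scr U_{0-}$ is zero exactly when $z(u)=[z,u]=0$ for all $u\in\scr U_1$. So I will prove the Jacobi identity for $x,y,z$ by downward induction on $|x|+|y|+|z|$, organised by how many of the arguments lie in $\scr U_1$.
\begin{itemize}
\item \textbf{Exactly one argument in $\scr U_1$.} By antisymmetry we may put it last. Then the identity reads $[[x,y],u]=[x,[y,u]]-(-1)^{xy}[y,[x,u]]$, which is precisely the defining recursion (\ref{recbase}) rewritten using antisymmetry.
\item \textbf{Two arguments in $\scr U_1$.} The bracket of two elements of $\scr U_1$ is undefined, so at most one such term survives; the remaining identity reduces, again via (\ref{recbase}), to either a tautology or the one-argument case above.
\item \textbf{All of $x,y,z$ in $\scr U_{0-}$.} Apply both sides of the Jacobi identity to $u$, i.e.\ take the bracket with $u$. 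Each double bracket $[[a,b],u]$ is expanded with (\ref{recbase}). This produces terms in which $u$ has been absorbed into one of $x,y,z$, so each term is a Jacobi-type expression of total degree one higher, where the inductive hypothesis applies (together with the one-argument case).
\end{itemize}
If the Jacobi combination is denoted $J(x,y,z)$, the last case amounts to a relation of the form
$$[J(x,y,z),u]=\pm J(x(u),y,z)\pm J(x,y(u),z)\pm J(x,y,z(u))\,+\,(\text{terms vanishing by the one-argument case}).$$
The right-hand side vanishes by induction, so $J(x,y,z)=0$.

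\textbf{Main obstacle.} I expect the main difficulty to be the sign bookkeeping in this last step. To simplify it, I would rather not track signs by hand. One cleaner route is to observe that (\ref{recbase}) says $\ad_x$ acts as a derivation. In other words, the map $\scr U_{0-}\to \End(\scr U_{1-})$ should be a homomorphism into the graded commutator algebra. If this is set up by induction, the Jacobi identity follows from the graded commutator identity in $\End$, combined with injectivity of $x\mapsto x|_{\scr U_1}$.
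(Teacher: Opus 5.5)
Your proposal is correct and is essentially the paper's proof. The paper carries out exactly your third Jacobi case: it evaluates $[[x,y],z]$ on $u\in U_1$, expands with the defining recursion, applies the inductive hypothesis to the three resulting Jacobi expressions of total degree one higher, and recombines into $[x,[y,z]](u)-(-1)^{xy}[y,[x,z]](u)$. It leaves implicit the well-definedness, antisymmetry and one-argument-in-$U_1$ checks that you spell out. One small correction: the case with two arguments in $U_1$ is simply vacuous, because every form of the identity then contains the undefined bracket $[u,v]$, and dropping that term gives an identity that fails in general, so it does not reduce to a tautology.
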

\noindent
In particular, $U_0=\mathfrak{gl}(U_1)$ as a Lie algebra.

\begin{proof}
Suppose that the subspace
$\scr U_{-p}\oplus \cdots \oplus \scr U_0 \oplus \scr U_1 $
is a regional Lie superalgebra for some $p\geq1$.
We then have
\begin{align}
[[x,y],z](u)&=[[x,y],z(u)]+(-1)^z[[x,y(u)],z]
+(-1)^{y+z}[[x(u),y],z]\nn\\
&=[x,[y,z(u)]]+(-1)^z[x,[y(u),z]]
+(-1)^{y+z}[x(u),[y,z]]\nn\\
&\quad\,-(-1)^{xy}[y,[x,z(u)]]-(-1)^{x(y+1)+z}[y(u),[x,z]]
-(-1)^{xy+z}[y,[x(u),z]]\nn\\
&=[x,[y,z]](u)-(-1)^{xy}[y,[x,z]](u)
\end{align}
for any $u\in \scr U_1$ and $x,y,z\in\scr G_-$ with $|x|+|y|+|z|=-p-1$. Thus the Jacobi identity
\begin{align} \label{jacobi1}
[x,[y,z]]-(-1)^{xy}[y,[x,z]]&=[[x,y],z]
\end{align}
is satisfied, and the proposition follows by induction.
\end{proof}
\noindent
We now let $\scr U$
be the unique global extension of the semilocal Lie algebra $\scr U_{1-}$ such that $\scr U_{+}$ is freely generated by
$\scr U_1$ (which exists according to Proposition~\ref{freely}).
It follows directly by the construction that $\scr U$ is a positively $0$-transitive Lie superalgebra.
The subalgebra $\scr U_+$ is generated by $\scr U_1$, but $\scr U_-$ is not generated by $\scr U_{-1}$,
so $\scr U$ is not locally generated.

\begin{defi}\label{defKantor}
Following Kantor \cite{Kantor}, we call $\scr U$ the \textbf{universal (graded) Lie superalgebra} associated to the
vector space $\scr U_1$. (The terminology will be justified by Theorem~\ref{Kantor-theorem} below.)
\end{defi}

\noindent
We here follow the approach of \cite{Palmkvist:2009qq,Palmkvist:2020hga}, where
the original concept of Kantor has been adapted to Lie superalgebras and given an
(equivalent) recursive definition.

\subsection{Prolongation}\label{prolsubsec}

\noindent
For any Lie superalgebra $\scr G$ with $\scr G_1=\scr U_1$, we let $\rho:\scr G_{1-} \to \scr U_{1-}$ be 
the vector space morphism
recursively defined by $\rho(u)=u$ for $u\in\scr G_1=\scr U_1$ and 
$\rho(x)=\rho \circ (\ad{\,x})$ for
$x\in \scr U_{0-}$, so that the recursion relation reads
$\rho(x)(u)=\rho[x,u]$ for any $u\in\scr G_1=\scr U_1$.

In particular, the restriction of $\rho$ to $G_0 \to U_0=\mathfrak{gl}(G_1)$
is the representation corresponding to the $G_0$-module structure on $G_1$ that the adjoint action
gives rise to, so it makes sense to use the notation $\rho$ also in this more general meaning.

\begin{proposition}\label{rhoprop}$ $
\begin{itemize}
\item[{\rm(a)}] If $\rho$ is injective, then $\scr G$ is positively $0$-transitive.
\item[{\rm(b)}] If $\scr G$ is positively $0$-transitive and $\scr G_+$ is generated by $\scr G_1$, then $\rho$ is injective.
\item[{\rm(c)}] The vector space morphism $\rho$ is a semilocal Lie superalgebra morphism.
\end{itemize}
\end{proposition}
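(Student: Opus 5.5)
The three parts have short, independent proofs. Throughout, the defining recursion is
\[
\rho(x)(u)=\rho[x,u]
\]
for $x\in\scr G_{0-}$ and $u\in\scr G_1$. The statement as written says ``$x\in\scr U_{0-}$''; this should read $x\in\scr G_{0-}$.

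For part (a), suppose $\rho$ is injective and let $x\in\scr G_{0-}$ with $[\scr G_+,x]=0$. In particular $[u,x]=0$ for all $u\in\scr G_1$, so $[x,u]=0$ by antisymmetry. Hence
\[
\rho(x)(u)=\rho[x,u]=0 \quad\text{for all } u\in\scr U_1,
\]
so $\rho(x)=0$, since an element of $\scr U_{0-}$ is a map on $\scr U_1$. Injectivity gives $x=0$, and so $\scr G$ is positively $0$-transitive.

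For part (b), take $x\in\ker\rho$ homogeneous of degree $-p\le 0$, and argue by induction on $p$.
\begin{itemize}
\item If $p=0$, then $\rho(x)=0$ means $[x,u]=\rho[x,u]=0$ for all $u\in\scr G_1$.
\item If $p>0$, then $\rho[x,u]=\rho(x)(u)=0$ with $[x,u]\in\scr G_{-p+1}$. By the induction hypothesis, $\rho$ is injective in degree $-p+1$, so $[x,u]=0$.
\end{itemize}
In either case $[x,\scr G_1]=0$. Since $\scr G_+$ is generated by $\scr G_1$, the Jacobi identity gives $[x,\scr G_+]=0$ (compare the Remark preceding Lemma~\ref{lemmabi}). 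Positive $0$-transitivity then forces $x=0$.

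For part (c), we must show $\rho[x,y]=[\rho(x),\rho(y)]$ whenever the bracket is defined in the semilocal region, that is, $x,y\in\scr G_{1-}$ with $|x|+|y|\le 1$. We use induction on $-(|x|+|y|)$, with the recursive definition \eqref{recbase} of the bracket on $\scr U_{1-}$.
\begin{itemize}
\item If one argument lies in $\scr G_1$, say $y=u$, then $\rho[x,u]=\rho(x)(u)=[\rho(x),u]$ by \eqref{recbase}. The case $x=u$ follows by antisymmetry on both sides.
\item If $x,y\in\scr G_{0-}$, evaluate on $u\in\scr U_1$. Using the Jacobi identity in $\scr G$,
\[
\rho[x,y](u)=\rho[[x,y],u]=\rho[x,[y,u]]+(-1)^y\rho[[x,u],y].
\]
The brackets $[x,[y,u]]$ and $[[x,u],y]$ pair elements whose total degree exceeds $|x|+|y|$, so the induction hypothesis applies and gives
\[
[\rho(x),\rho[y,u]]+(-1)^y[\rho[x,u],\rho(y)]=[\rho(x),\rho(y)(u)]+(-1)^y[\rho(x)(u),\rho(y)],
\]
which equals $[\rho(x),\rho(y)](u)$ by \eqref{recbase}.
\end{itemize}

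The main obstacle is the bookkeeping in (c). The induction has to be set up on total degree so that every bracket appearing on the right already falls under the hypothesis; this includes brackets with one argument in $\scr G_1$, which the first case handles directly. The signs also have to be checked against \eqref{recbase}, in particular the rewriting $[x,[y,u]]-(-1)^{xy}[y,[x,u]]=[x,[y,u]]+(-1)^y[[x,u],y]$.
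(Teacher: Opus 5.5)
Your proposal is correct and matches the paper's proof step for step. Part (a) is the same direct argument, and (b) and (c) are the same inductions on degree. In (c), your rewriting $-(-1)^{xy}[y,[x,u]]=(-1)^{y}[[x,u],y]$ spells out the antisymmetry the paper uses implicitly to match the recursive bracket. You are also right that the definition of $\rho$ should read $x\in\scr G_{0-}$ rather than $\scr U_{0-}$.
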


\begin{proof}$ $
\begin{itemize}
\item[(a)] If $\rho$ is injective and $[\scr G_+,x]=0$ for some $x\in\scr G_{0-}$, 
then $[x,u]=0$ and $\rho(x)(u)=\rho[x,u]=0$ for all $u\in\scr G_1$. Since $\rho$ is injective,  $x=0$.
\item[(b)]
Suppose that the regional part of $\rho$ with region $\{-p,\ldots,1\}$, for some $p\geq-1$,
is injective. If $\rho(z)=0$ for $z\in{\scr G}_{-p-1}$, then $0=\rho(z)(u)=\rho[z,u]$ 
and
$[z,u]=0$ for all $u\in\scr G_1$. Since
$\scr{G}_+$ is generated by $\scr G_1$ and $G$ is positively $0$-transitive,
$z=0$. It thus follows by induction that $\rho$ is injective.
\item[(c)] For any $z\in \scr G_{0-}$ and $u\in \scr G_1=\scr U_1$, we have
\begin{align}
[\rho(z),\rho(u)]=[\rho(z),u]=\rho(z)(u)=\rho[z,u]\,. \label{base}
\end{align}
Suppose now that the regional part of $\rho$ with region $\{-p,\ldots,1\}$, for some $p\geq-1$,
is a regional Lie superalgebra morphism.
Then
\begin{align}
\rho[x,y](u)
&=\rho[[x,y],u]\nn\\
&=\rho[x,[y,u]]-(-1)^{xy}\rho[y,[x,u]]\nn\\
&=[\rho(x),\rho[y,u]]-(-1)^{xy}[\rho(y),\rho[x,u]]\nn\\
&=[\rho(x),\rho(y)(u)]-(-1)^{xy}[\rho(y),\rho(x)(u)]\nn\\
&=[\rho(x),\rho(y)](u)
\end{align}
for $x,y\in \scr G_{0-}$ with $|x|+|y|=-p-1$ and $u\in \scr G_1=\scr U_1$. Thus $[\rho(x),\rho(y)]=\rho[x,y]$.
The statement now follows by induction, with (\ref{base}) as the base case.
\end{itemize}
\end{proof}
\begin{corollary}
For any Lie superalgebra $\scr G$ with $\scr G_1=\scr U_1$, the subspace $\rho(\scr G_{0-})$ of $\scr U_{0-}$
is a subalgebra. If in addition $\scr G$ is positively $0$-transitive and $\scr G_+$ is generated by $\scr G_1$,
then this subalgebra is isomorphic to $\scr G_{0-}$.
\end{corollary}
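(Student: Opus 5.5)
The corollary follows from Proposition~\ref{rhoprop}. By part (c), $\rho$ restricted to $\scr G_{0-}$ is a morphism of (regional) Lie superalgebras into $\scr U_{0-}$; here $\scr G_{0-}$ is a subalgebra of $\scr G$ and $\scr U_{0-}$ is a subalgebra of the semilocal Lie superalgebra $\scr U_{1-}$ of Proposition~\ref{prop:semilocal}, since brackets of non-positive degree elements stay in non-positive degrees. The first step is therefore to observe that $\rho(\scr G_{0-})$ is a subalgebra. Indeed, for $x,y\in\scr G_{0-}$ we have
\begin{equation}
[\rho(x),\rho(y)]=\rho[x,y]\in\rho(\scr G_{0-}),
\end{equation}
and $\rho(\scr G_{0-})$ is a graded subspace because $\rho$ preserves degree. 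Hence the image is closed under the bracket of $\scr U_{1-}$.

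For the second statement, assume $\scr G$ is positively $0$-transitive and $\scr G_+$ is generated by $\scr G_1$. Part (b) of Proposition~\ref{rhoprop} then gives that $\rho$ is injective. Its restriction to $\scr G_{0-}$ is thus an injective morphism of Lie superalgebras onto its image $\rho(\scr G_{0-})$, that is, a bijective morphism. The inverse of a bijective bracket-preserving linear map automatically preserves brackets, so $\scr G_{0-}\simeq\rho(\scr G_{0-})$.

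The only point needing some care is that part (c) is stated for the semilocal part $\scr G_{1-}$, which includes degree $1$. Restricting to $\scr G_{0-}$ is harmless, because $\scr G_{0-}$ is closed under the bracket and $\rho$ maps it into $\scr U_{0-}$. I expect no real obstacle beyond this bookkeeping.
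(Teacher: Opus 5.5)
Your proof is correct and matches the paper, which states this corollary without a separate proof as an immediate consequence of Proposition~\ref{rhoprop}. There, part (c) gives $[\rho(x),\rho(y)]=\rho[x,y]$, so $\rho(G_{0-})$ is closed under the bracket, and part (b) gives injectivity, so $\rho$ restricts to an isomorphism $G_{0-}\to\rho(G_{0-})$.
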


\noindent
The following theorem, given in \cite{Kantor} (Theorem~2),
describes a universal property of~$\scr U$.

\begin{theorem} \label{Kantor-theorem}
Let $\scr G$ be a 
positively $0$-transitive Lie superalgebra $\scr G$ with positive part $\scr G_{+}$ 
generated by $\scr G_1=\scr U_1$,
so that $\scr U$ is the universal Lie superalgebra associated to $\scr G_1$.
\begin{itemize}
\item[{\rm(a)}] The Lie superalgebra $\scr G$ is isomorphic to
a quotient
$({\scr T}_{0-}
\oplus {\scr U}_+ )/\scr D_{2+}\,$,
where 
${\scr T}_{0-}$ is a subalgebra of $\scr U_{0-}$ and 
$\scr D_{2+}$ is an ideal of
${\scr T}_{0-}\oplus {\scr U}_+ $ included in ${\scr U}_{2+}$.
\item[{\rm(b)}] Conversely, 
for any subalgebra $\scr T_{0-}$ of $\scr U_{0-}$ 
and ideal $\scr D_{2+}$ of
${\scr T}_{0-}
\oplus {\scr U}_+ $ included in ${\scr U}_{2+}$, the quotient
$({\scr T}_{0-}
\oplus {\scr U}_+ )/\scr D_{2+}$
is a positively $0$-transitive Lie superalgebra with positive part
generated by its subspace at $\mathbb{Z}$-degree $1$.
\end{itemize}
\end{theorem}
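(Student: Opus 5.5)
For part (a), I will build a morphism from the semilocal part and extend it. By Proposition~\ref{rhoprop}(b), the map $\rho\colon\scr G_{1-}\to\scr U_{1-}$ is injective, because $\scr G$ is positively $0$-transitive and $\scr G_+$ is generated by $\scr G_1$. By Proposition~\ref{rhoprop}(c), $\rho$ is a semilocal Lie superalgebra morphism. So $\scr T_{0-}:=\rho(\scr G_{0-})$ is a subalgebra of $\scr U_{0-}$ isomorphic to $\scr G_{0-}$, which is the content of the preceding corollary.

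Next I check that $\scr T_{0-}\oplus\scr U_+$ is a subalgebra of $\scr U$. The brackets inside $\scr T_{0-}$ and inside $\scr U_+$ close. For $[\scr T_{0-},\scr U_+]$, I argue by induction on the positive degree, using that $\scr U_+$ is generated by $\scr U_1$ and the Jacobi identity. The bracket $[t,u]=t(u)$ with $u\in\scr U_1$ lies in $\rho(\scr G_{0-})\oplus\scr U_1$, because $\rho(x)(u)=\rho[x,u]$. Brackets with higher elements $[u,v]$ then reduce to brackets of this type plus elements of $\scr U_+$.

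I then define $\varphi\colon\scr T_{0-}\oplus\scr U_+\to\scr G$. On $\scr T_{0-}$ it is $\rho^{-1}$. On $\scr U_+$ it is the unique morphism from the free Lie superalgebra $\scr U_+$ on $\scr U_1=\scr G_1$ into $\scr G_+$ that extends the identity on degree $1$. The main obstacle is checking that $\varphi$ respects the mixed brackets $[t,y]$ with $t\in\scr T_{0-}$ and $y\in\scr U_+$. I would do this by induction on $|y|$, taking $y=[u,y']$ with $u\in\scr U_1$. I expand $[t,[u,y']]$ by Jacobi, then use $\varphi[t,u]=[\rho^{-1}t,u]$, which holds because $\rho$ is a semilocal morphism and $\rho^{-1}$ is therefore one on $\scr G_{1-}$. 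Here I need care with the cases where $[t,u]$ lands in degree $\le 0$ versus degree $1$. Once $\varphi$ is a morphism, it is surjective: it is bijective on degrees $\le 1$, and $\scr G_+$ is generated by $\scr G_1$. Its kernel $\scr D$ is an ideal that vanishes in degrees $\le 1$, so $\scr D\subseteq\scr U_{2+}$. This gives (a).

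For part (b), let $\scr G'=(\scr T_{0-}\oplus\scr U_+)/\scr D_{2+}$. Because $\scr D_{2+}\subseteq\scr U_{2+}$, the quotient does not change degrees $\le1$. Its positive part is generated by degree $1$, since this holds for $\scr U_+$. For positive $0$-transitivity, take $x\in\scr G'_{0-}=\scr T_{0-}\subseteq\scr U_{0-}$ with $[\scr G'_+,x]=0$. In particular $[u,x]=0$ in $\scr G'$ for all $u\in\scr U_1$. This bracket lies in degree $\le1$, where the quotient is trivial, so $x(u)=0$ in $\scr U$. By the recursive definition of $\scr U_{0-}$ as iterated $\Hom(\scr U_1,\cdot)$, $x$ is determined by the values $x(u)$, so $x=0$.
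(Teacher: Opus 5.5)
Your proposal is correct and follows essentially the same route as the paper. You take $\scr T_{0-}=\rho(\scr G_{0-})$, define $\varphi$ as $\rho^{-1}$ on the non-positive part and as the extension $\scr U_+\to\scr G_+$ of the identity on degree $1$, check the mixed brackets by induction on the degree of the positive argument using the Jacobi identity, and set $\scr D_{2+}=\ker\varphi$. For (b) you use, as the paper does, that an element of $\scr U_{0-}$ is determined by its brackets with $\scr U_1$. The one thing you make explicit that the paper leaves implicit is that $\rho(\scr G_{0-})\oplus\scr U_+$ is closed under brackets, which is needed for $\varphi[\rho(z),v]$ to be defined; this is the condition later recorded in Proposition~\ref{subalgebra}.
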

\begin{remark}
It is obvious that  \textit{as a vector space} $\scr G$ is isomorphic to $(\rho(\scr G_{0-})
\oplus {\scr U}_+ )/\scr D_{2+}\,$,
where~$\scr D_{2+}$
is an ideal of
${\scr U}_+ $ included in ${\scr U}_{2+}$. Part (a) of the theorem says that $\scr D_{2+}$ furthermore
is an ideal
of $\rho(\scr G_{0-})\oplus {\scr U}_+ $, not only of ${\scr U}_+$.
\end{remark}

\begin{proof} We will show that (a) holds with $T_{0-}=\rho(G_0)$.
Since $\scr{G}_+$ is generated by the subspace $\scr{G}_1$, there is
a surjective Lie superalgebra morphism
$\varphi_{+}: {\scr U}_+ \to \scr{G}_+ $
which is the identity map on $\scr U_{1}=\scr G_1$. Let $\varphi$ be the surjective
vector space morphism
\begin{align}
\varphi : \rho(\scr G_{0-})\oplus \scr U_{\,+} \to \scr G
\end{align}
given by $\varphi(\rho(x))=x$ for $x \in \scr G_{0-}$ and $\varphi(x)=\varphi_{\,+}(x)$ for $x \in \scr U_{+}$.
It is well defined since $\rho$ is injective by Proposition~\ref{rhoprop},
and accordingly, the restriction of $\varphi$ to $\scr G_{0-}$ is injective too.
It remains to show that 
\begin{align}
\varphi[\rho(z),v]=[\varphi(\rho(z)),\varphi(v)]
\end{align}
for all $z\in\scr G_{0-}$ and $v\in\scr U_+$. This can be done by induction over $|v|\geq1$ with the base case
\begin{align}
\varphi[\rho(z),u]=\varphi(\rho[z,u])=[z,u]=[\varphi(\rho(z)),\varphi(\rho(u))]
\end{align}
for $u\in\scr G_1$, obtained by acting with $\varphi$ on (\ref{base}),
and the induction step
\begin{align}
[\varphi(x),\varphi[v,w]]&=[\varphi(x),[\varphi(v),\varphi(w)]]\nn\\
&=[[\varphi(x),\varphi(v)],\varphi(w)]-(-1)^{vw}[[\varphi(x),\varphi(w)],\varphi(v)]\nn\\
&=[\varphi[x,v],\varphi(w)]-(-1)^{vw}[\varphi[x,w],\varphi(v)]\nn\\
&=\varphi[[x,v],w]-(-1)^{vw}\varphi[[x,w],v]=\varphi[x,[v,w]]\,
\end{align}
where $v,w\in\scr U_{+}$.
If we now let $\scr D_{2+}$ be the kernel of $\varphi$ (which as an ideal of $\scr U_{+}$ is equal to
the kernel of $\varphi_+$), then it follows that $\scr G$ is isomorphic to
the quotient
$\big(\rho(\scr G_{0-})\oplus \scr U_{+}\big)/\scr D_{2+}$. This proves part (a).

Conversely, consider any quotient $({\scr T}_{0-}\oplus {\scr U}_+ )/\scr D_{2+}$, where 
${\scr T}_{0-}\oplus {\scr U}_+$ is a subalgebra of $\scr U$ and $\scr D_{2+}$ is an ideal of
${\scr T}_{0-}
\oplus {\scr U}_+ $ included in ${\scr U}_{2+}$. The positive part is 
${\scr U}_+ /\scr D_{2+}$,
which is generated by its subspace at degree $1$,
and the non-positive part is embedded in $\scr U_{0-}$.
Since $\scr U_{0-}$ is positively $0$-transitive by construction, this also holds for 
the non-positive part of $({\scr T}_{0-}\oplus {\scr U}_+ )/\scr D_{2+}$, and thereby for the entire Lie superalgebra.
This proves part (b).
\end{proof}
\noindent
In the theorem, ${\scr T}_{0-}\oplus {\scr U}_+$ is a subalgebra of $\scr U$, which includes a subalgebra
$\scr T_{\scr 0-}$ of $\scr U_{0-}$ and the whole of $\scr U_+$. The next proposition characterises such subalgebras of 
$\scr U$.

\begin{proposition} \label{subalgebra}
Set $\scr T_1=\scr U_1$ and let $\scr T_{0-}$ be a subspace of $\scr U_{0-}$ such that
\begin{align} \label{subalgcond}
\scr T_{-k}\subseteq \Hom{(\scr U_1,\scr T_{-k+1})}
\end{align}
for $k=0,1,\ldots$. 
Then the subspace
${\scr T}_{0-}\oplus {\scr U}_+$ of $\scr U$
is a subalgebra. Conversely, the subspaces
$\scr T_{-k}$ for $k=0,1,\ldots$ of
any subalgebra ${\scr T}_{0-}\oplus {\scr U}_+$ of $\scr U$ satisfy this condition.
\end{proposition}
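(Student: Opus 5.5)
The plan is to check closure of ${\scr T}_{0-}\oplus{\scr U}_+$ under the bracket of $\scr U$ degree by degree, splitting into three kinds of brackets. I read condition (\ref{subalgcond}) as saying that for every $x\in\scr T_{-k}$ and $u\in\scr U_1$ we have $x(u)=[x,u]\in\scr T_{-k+1}$; for $k=0$ this is just $\scr T_0\subseteq\Hom(\scr U_1,\scr U_1)$, which holds automatically.

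First, brackets inside $\scr U_+$ stay in $\scr U_+$, since $\scr U_+$ is a subalgebra. Next, for $x,y\in\scr T_{0-}$ the bracket $[x,y]$ lies in $\scr U_{0-}$, and nothing so far forces it into $\scr T_{0-}$. So I will take the statement to mean, as in Theorem~\ref{Kantor-theorem}, that $\scr T_{0-}$ is a subalgebra of $\scr U_{0-}$. Equivalently, I take $\scr T_{0-}$ to be closed under the bracket, which must be part of the hypothesis for the statement to be true.

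The real content is the mixed brackets $[x,v]$ with $x\in\scr T_{-k}$ and $v\in\scr U_p$, $p\geq1$. If $p>k$ the result lies in $\scr U_+$ and there is nothing to show. If $p\leq k$ we need $[x,v]\in\scr T_{p-k}$, and I argue by induction on $p$. For $p=1$ this is exactly condition (\ref{subalgcond}). For $p\geq2$, since $\scr U_+$ is freely generated by $\scr U_1$, $v$ is a sum of brackets $[u,w]$ with $u\in\scr U_1$ and $w\in\scr U_{p-1}$. By the Jacobi identity
\begin{align}
[x,[u,w]]=[[x,u],w]+(-1)^{xu}[u,[x,w]]\,.
\end{align}
In the first term, $[x,u]\in\scr T_{-k+1}$ by the condition, and its bracket with $w$ lies in $\scr T_{p-k}\oplus\scr U_+$ by the induction hypothesis. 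In the second term, $[x,w]\in\scr T_{-k+p-1}$ (or lies in $\scr U_+$), and bracketing with $u\in\scr U_1$ again lands in $\scr T\oplus\scr U_+$, either by the condition or by closure of $\scr U_+$. The case where the intermediate degree is exactly $0$ is harmless, since then we are bracketing an element of $\scr T_0$ with $\scr U_1$.

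For the converse, let ${\scr T}_{0-}\oplus{\scr U}_+$ be a subalgebra. Take $x\in\scr T_{-k}$ and $u\in\scr U_1$. Then $x(u)=[x,u]$ lies in the subalgebra and has degree $-k+1$. For $k\geq1$ this puts it in $\scr T_{-k+1}$, and for $k=0$ it lies in $\scr U_1=\scr T_1$. Hence $x\in\Hom(\scr U_1,\scr T_{-k+1})$, which is condition (\ref{subalgcond}).

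The main obstacle is the status of the $[\scr T_{0-},\scr T_{0-}]$ closure. I will try to derive it from (\ref{subalgcond}) using the recursive formula (\ref{recbase}),
\begin{align}
[x,y](u)=[x,y(u)]+(-1)^y[x(u),y]\,,
\end{align}
by induction on the total degree, aiming to show $[x,y](u)\in\scr T$. This only places $[x,y]$ in $\Hom(\scr U_1,\scr T)$, not in $\scr T$ itself. A counterexample such as $\scr T_0$ a non-closed subspace of $\mathfrak{gl}(\scr U_1)$ shows the derivation cannot work. So the proof will explicitly assume that $\scr T_{0-}$ is a subalgebra, as the preceding text indicates.
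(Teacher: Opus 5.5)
Your proposal is correct. Where it overlaps with the paper it is the same argument, written out. The paper's proof only asserts that $[T_{0-},U_+]\subseteq T_{0-}\oplus U_+$ is equivalent to (\ref{subalgcond}). Your induction on $p$ via $[x,[u,w]]=[[x,u],w]+(-1)^{xu}[u,[x,w]]$, together with the converse via $x(u)=[x,u]$, is exactly what that assertion requires.

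The real divergence is the bracket of $T_{0-}$ with itself. The paper's proof claims it is ``easy to see'' that (\ref{subalgcond}) forces $[T_{0-},T_{0-}]\subseteq T_{0-}\oplus U_+$. You are right that this is false. So the forward direction, as stated, needs the extra hypothesis that $T_{0-}$ is a subalgebra of $U_{0-}$. This matches how $T_{0-}$ appears in Theorem~\ref{Kantor-theorem} and in the sentence introducing the proposition.

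Make your counterexample explicit: take $T_{-k}=0$ for $k\geq1$ and $T_0$ a non-closed subspace of $\mathfrak{gl}(U_1)$. This satisfies (\ref{subalgcond}) trivially, but it needs $\dim U_1\geq2$. The failure also occurs in negative degree, for $\dim U_1=1$, and with $T_0$ a subalgebra:
\begin{itemize}
\item Let $e$ be a basis of $U_1$, and take $T_0=U_0$, $T_{-1}=U_{-1}=\Hom(U_1,T_0)$ and $T_{-k}=0$ for $k\geq2$. The condition holds.
\item Take $x\in U_{-1}$ with $x(e)=\mathrm{id}_{U_1}$. The computation in Remark~\ref{remarkfaith}, which uses only (\ref{recbase}), gives $\tfrac12[x,x](e)(e)=x(x(e)(e))-[x(e),x(e)]=\mathrm{id}_{U_1}\neq0$.
\item Hence $[T_{-1},T_{-1}]\not\subseteq T_{-2}$.
\end{itemize}

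Your explanation of why the recursion (\ref{recbase}) cannot rescue the claim is also accurate: it only yields $[x,y]\in\Hom(U_1,T_{-k-l+1})$. The argument does close, by induction on $k+l$, when $T_0$ is a subalgebra and (\ref{subalgcond}) holds with equality for all $k\geq1$. In that case $\Hom(U_1,T_{-k-l+1})=T_{-k-l}$.
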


\begin{proof}
Let $T_{0-}$ be an arbitrary subspace of $U_{0-}$. It is easy to see that 
$[T_{0-},U_+]\subseteq T_{0-}\oplus U_+$ if and only if (\ref{subalgcond}) holds for
$k=0,1,\ldots$. It is also easy to see that if (\ref{subalgcond}) holds for
$k=0,1,\ldots$, then $[T_{0-},T_{0-}]\subseteq T_{0-}\oplus U_+$.
\end{proof}

In order to furthermore characterise subalgebras ${\scr T}_{0-}\oplus {\scr U}_+$
of $\scr U$ such that an ideal $\scr D_{2+}$ of $\scr U_+$
also is an ideal of ${\scr T}_{0-}\oplus {\scr U}_+$, we of course have to add the condition 
$[\scr T_{0-},\scr D_{2+}]\subseteq \scr D_{2+}$
to (\ref{subalgcond}). This condition means that $\scr T_{0-}$ be included in the \textbf{idealiser} 
(or \textbf{normaliser}) $\scr N$ of
$\scr D_{2+}$ in $\scr U$, defined by
\begin{align}
\scr N =\{x \in \scr U \,|\, [x,d]\in \scr D_{2+} \text{ for all } d\in \scr D_{2+}\}\,.
\end{align}
Obviously, $\scr N_+=\scr U_+$, since $\scr D_{2+}$ is already an ideal of $\scr U_+$.
By extending ${\scr T}_{0-}
\oplus {\scr U}_+ $ to $\scr N$ we thus get an extension of $({\scr T}_{0-}
\oplus {\scr U}_+ )/\scr D_{2+}$ to $\scr N/\scr D_{2+}$, 
where the positive part is $\scr U_+/\scr D_{2+}$ and the non-positive part $\scr N_{0-}/\scr D_{2+}\simeq \scr N_{0-}$
is in some sense as
large as possible (under the condition of respecting the ideal $\scr D_{2+}$).
This motivates the following definition.

\begin{defi}\label{prolong}
Let $\scr G_+$ be a Lie superalgebra generated by $\scr G_1=\scr U_1$, so that
$\scr G_+\simeq \scr U_+/\scr D_{2+}$ for some ideal $\scr D_{2+}$ of
$\scr U_+$ included in $\scr U_{2+}$. Let $\scr N$ be the 
idealiser of $\scr D_{2+}$ in $\scr U$.
Then the \textbf{prolongation} of
$\scr G_+$ is the quotient
${\scr N} / \scr D_{2+}$.
\end{defi}

\noindent
The prolongation of a locally generated Lie superalgebra $G_+$ is usually (adapted to our conventions)
defined as the maximal (with respect to inclusion)
positively $0$-transitive Lie superalgebra $\hat{G}$ such that $\hat{G}_+=G_+$.
The existence and uniqueness of a prolongation in this definition
follows from Theorem~\ref{Kantor-theorem}, and the two definitions are equivalent up to the isomorphism
$\scr G_+\simeq \scr U_+/\scr D_{2+}$. We get the
following version of Theorem~\ref{Kantor-theorem}.

\begin{proposition}~\label{prolongation} 
Any positively $0$-transitive Lie algebra $\scr G$
such that $\scr G_+$
is generated by
$\scr G_1$ is
embedded in the prolongation
of $\scr G_+$.
\end{proposition}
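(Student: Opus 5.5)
The plan is to apply Theorem~\ref{Kantor-theorem}(a) and then enlarge the non-positive part to the idealiser. Let $\scr G$ be positively $0$-transitive with $\scr G_+$ generated by $\scr G_1=\scr U_1$.

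First, take the construction from the proof of Theorem~\ref{Kantor-theorem}(a). It gives a surjective morphism
\begin{align}
\varphi:\rho(\scr G_{0-})\oplus\scr U_+\to\scr G
\end{align}
with kernel $\scr D_{2+}=\ker\varphi_+$. Since $\rho$ is injective by Proposition~\ref{rhoprop}(b), this yields an isomorphism
\begin{align}
\scr G\simeq\big(\rho(\scr G_{0-})\oplus\scr U_+\big)/\scr D_{2+}\,,
\end{align}
where $\scr D_{2+}$ is an ideal of $\rho(\scr G_{0-})\oplus\scr U_+$ included in $\scr U_{2+}$. Moreover $\scr U_+/\scr D_{2+}\simeq\scr G_+$ through $\varphi_+$. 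So, up to this identification, $\scr D_{2+}$ is the ideal used in Definition~\ref{prolong} to define the prolongation of $\scr G_+$.

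Next I show that $\rho(\scr G_{0-})\oplus\scr U_+$ is contained in the idealiser $\scr N$ of $\scr D_{2+}$ in $\scr U$. Because $\scr D_{2+}$ is an ideal of $\rho(\scr G_{0-})\oplus\scr U_+$, we have $[x,d]\in\scr D_{2+}$ for every $x$ in this subalgebra and every $d\in\scr D_{2+}$. This is exactly the defining condition of $\scr N$. Since $\scr D_{2+}\subseteq\rho(\scr G_{0-})\oplus\scr U_+\subseteq\scr N$ and both are subalgebras of $\scr U$, the inclusion descends to an injective Lie superalgebra morphism
\begin{align}
\big(\rho(\scr G_{0-})\oplus\scr U_+\big)/\scr D_{2+}\hookrightarrow\scr N/\scr D_{2+}\,.
\end{align}
Composing it with the inverse of the isomorphism induced by $\varphi$ embeds $\scr G$ into the prolongation.

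The only delicate point is the well-definedness of the identification of $\scr D_{2+}$. Definition~\ref{prolong} fixes $\scr D_{2+}$ as the kernel of the canonical surjection $\scr U_+\to\scr G_+$ that is the identity on $\scr U_1$. The kernel of $\varphi$ coincides with this kernel, since $\varphi_+$ is that surjection, so the prolongation obtained is that of $\scr G_+$ itself. Beyond this check, the argument is only a reinterpretation of Theorem~\ref{Kantor-theorem}(a) together with the remark preceding Definition~\ref{prolong}.
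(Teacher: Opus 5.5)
Your proposal is correct and follows the paper's own proof: you apply Theorem~\ref{Kantor-theorem}(a) with $\scr T_{0-}=\rho(\scr G_{0-})$, observe that $\rho(\scr G_{0-})\oplus\scr U_+$ lies in the idealiser $\scr N$ of $\scr D_{2+}$, and pass to the quotient by $\scr D_{2+}$. Your extra check that $\ker\varphi$ equals the kernel of the canonical surjection $\scr U_+\to\scr G_+$ only makes explicit an identification that the paper leaves implicit.
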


\begin{proof}
According to Theorem~\ref{Kantor-theorem}, the Lie superalgebra
$\scr G$ is (with $\scr G_1=\scr U_1$) isomorphic to $(\scr{T}_{0-}
\oplus \scr{U}_+ )/\scr D_{2+}$, where 
$\scr{T}_{0-}$ is a subalgebra of $\scr U_{0-}$ and $\scr D_{2+}$ is an ideal of
$\scr{T}_{0-}
\oplus \scr{U}_+ $ included in $\scr{U}_{2+}$. This means that $\scr{T}_{0-}
\oplus \scr{U}_+ $ is a subalgebra of the idealiser $ \scr N$ of $\scr D_{2+}$, 
when $\scr D_{2+}$ is considered as a subspace of $\scr U$.
Thus $(\scr{T}_{0-}
\oplus \scr{U}_+ )/\scr D$ is a subalgebra of $ \scr N/\scr D_{2+}$, that is, of the prolongation of $\scr G_+$.
\end{proof}

\begin{remark}
In the original definition of a prolongation 
by Tanaka \cite{tanaka} (generalised from Lie algebras
to Lie superalgebras and adapted to our conventions), the Lie superalgebra $G_+$
was assumed to be finite-dimensional. As we have seen, this is not necessary. On the other hand,
in the case where $G_+$ is a finite-dimensional Lie algebra but not necessarily generated by $G_1$,
the existence and uniqueness
of a corresponding extension, called \textit{universal graded Lie algebra of type $G_+$}
was proven in \cite{qingyunguangyu}.
\end{remark}

\begin{example}
Let $U_1$ be an $n$-dimensional odd vector space with basis elements $E_a$, where $a=1,\ldots,n$.
Then, as vector spaces, $U_0\simeq (U_1)^\ast \otimes U_1$, and
\begin{align}
U_{-1}\simeq (U_1)^\ast \otimes U_0 \simeq (U_1)^\ast \otimes (U_1)^\ast \otimes U_1\,.
\end{align}
Accordingly, we introduce basis elements $K^a{}_b$ for $U_0$ and $K^{ab}{}_c$ for $U_{-1}$
(where all indices take the values $1,\ldots,n$ and no symmetry or antisymmetry among them is assumed) with
relations
\begin{align}
[E_a,K^b{}_c]&=\delta_a{}^b E_c\,, & [E_a,K^{bc}{}_d]&=\delta_a{}^b K^c{}_d\,.
\end{align}
Let $D_{2+}=U_{2+}$ (so that $U_+/D_{2+}\simeq U_1$, considered as an abelian odd Lie superalgebra) and let $N$ be the idealiser of $D_{2+}$ in $U$. Then
$N_{0+}=U_{0+}$. In order to determine the negative part of $N$, let us first consider $N_{-1}$
and decompose $U_{-1}$ into a direct sum of two subspaces with bases $K^{(ab)}{}_c$ and $K^{[ab]}{}_c$,
respectively, where
\begin{align}
K^{(ab)}{}_c &= \tfrac12(K^{ab}{}_c+K^{ba}{}_c)\,, & K^{[ab]}{}_c &= \tfrac12(K^{ab}{}_c-K^{ba}{}_c)\,.
\end{align}
The subspace $N_{-1}$ of $U_{-1}$ consists
of all elements $x \in U_{-1}$ such that $[x,U_{2}]=0$.
We have
\begin{align}
[K^{ab}{}_c,[E_d,E_e]]=2[[K^{ab}{}_c,E_{[d}],E_{e]}]=-2\delta_{[d}{}^a[K^b{}_c,E_{e]}]=
2\delta_{[d}{}^{a}\delta_{e]}{}^b E_c
\end{align}
and it follows that $[K^{(ab)}{}_{c},U_2]=0$ whereas $[K^{[ab]}{}_{c},U_2]\neq0$.

Thus 
$N_{-1}\simeq(U_1{}^\ast \vee U_1{}^\ast ) \otimes U_1$,
and
generally, we will find
$N_{-p+1}\simeq(U_1{}^\ast)^{\vee p} \otimes U_1$ 
for all $p=0,1,\ldots,n$, where we use the symbol $\vee$ for graded symmetric tensor product,
\begin{align}
2(x\vee y) =x \otimes y-(-1)^{xy}y\otimes x.
\end{align}
Since $U_1{}^\ast$ is odd, this graded symmetry actually
means antisymmetry
and $N_{-p+1}=0$ for $p\geq n+1$. Since $D_{2+}=U_{2+}$, factoring out $D_{2+}$ does not affect the isomorphisms
$N_{-p+1}\simeq(U_1{}^\ast)^{\vee p} \otimes U_1$
for all $p=0,1,\ldots,n$, but sets all
subspaces at other degrees to zero, so we have
$(N/D_{2+})_{-p+1}\simeq(U_1{}^\ast)^{\vee p} \otimes U_1$ for $p=0,1,\ldots,n$
and $(N/D_{2+})_{2+}=0$.

The Lie superalgebra $N/D_{2+}$ obtained by prolongation in this example is 
denoted $W(n)$ and, in the classification of finite-dimensional simple Lie superalgebras over $\mathbb{C}$,
it is said to be of Cartan type. It is the derivation algebra of the associative and graded commutative
Grassmann superalgebra generated by 
$U_1$, and can also be considered as consisting of (odd) polynomial vector fields on $U_1$ \cite{kacLieSuperalgebras1977}. 
\label{W(n)}
\end{example}

\noindent
The procedure of prolongation originates from work by Cartan \cite{cartan}
and has since then appeared in many different versions,
in particular the definition by Tanaka that we referred to above, but also
the construction by Weisfeiler based on filtrations \cite{weisfeiler}. Certain reduced prolongations 
(see below)
are also known as \textit{partial} prolongations
and were described in \cite{ALShch,shch}.
We refer to \cite{leites2024classificationsimplecomplexlie} for more about the history of prolongations, and to 
\cite{kruglikov2022} for the geometric counterpart of the algebraic concept that we consider here.

\subsection{Reduced prolongation} \label{redprol}

The result in the preceding subsection can be generalised. Between the two extremal cases in Theorem~\ref{Kantor-theorem}
(where the subalgebra of $\scr U_{0-}$
is as small as possible, so that the embedding is an isomorphism) and Proposition~\ref{prolongation}
(where the subalgebra
is as large as possible), we can let the subspaces $\scr T_{-k}$ be arbitrary down to some degree $-p$, and 
then as large as possible for lower degrees.

\begin{proposition}\label{refinement}
Let $\scr D_{2+}$ be an ideal of $\scr U_+$ and let $\scr N$ be the idealiser of $\scr D_{2+}$ in $\scr U$.
Set $\scr T_1=\scr U_1$ and, for some $p\geq0$, 
let $\scr T_0,\scr T_{-1},\ldots,\scr T_{-p}$ be subspaces of ${\scr U}_{\,0},\scr U_{\,-1},\ldots,\scr U_{\,-p}$ satisfying
the conditions
\begin{align} \label{V-villkor}
\scr T_{-k} &\subseteq \Hom{(\scr U_1,\scr T_{-k+1})}\,, &
[\scr T_{-k},\scr D_{2+}]&\subseteq \scr D_{2+}\,.
\end{align}
Furthermore, set $\scr N'{}_{-p}=\scr T_{-p}$ and define recursively
\begin{align}
\scr N'{}_{-k}=\Hom{(\scr U_1,\scr N'{}_{-k+1})} \cap \scr N_{-k}
\end{align}
for all $k\geq p+1$.
Then the subspace 
\begin{align}
\scr N' = \cdots \oplus \scr N'{}_{-p-1} \oplus \scr T_{-p}
\cdots \oplus \scr T_{-1} \oplus \scr T_{0} 
\oplus \scr U_1 \oplus \scr U_2 \oplus \cdots
\end{align}
of $\scr U$ is a subalgebra of $\scr N$.
\end{proposition}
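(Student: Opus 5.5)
The plan is to show two things: that $\scr N'$ is included in $\scr N$, and that $\scr N'$ is a subalgebra of $\scr U$. Since $\scr N$ is a subalgebra (the idealiser of a subspace in a Lie superalgebra is closed under brackets by the Jacobi identity), these two facts give the statement.

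\emph{Step 1: inclusion in $\scr N$.} We have $\scr N_+=\scr U_+$, because $\scr D_{2+}$ is an ideal of $\scr U_+$. For $0\le k\le p$ the second condition in (\ref{V-villkor}) says precisely that $\scr T_{-k}\subseteq\scr N_{-k}$. For $k\geq p+1$ the recursive definition intersects with $\scr N_{-k}$, so $\scr N'{}_{-k}\subseteq\scr N_{-k}$ by construction.

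\emph{Step 2: the subalgebra condition.} We use Proposition~\ref{subalgebra}. Writing $\scr N'{}_{-k}=\scr T_{-k}$ for $0\le k\le p$, its hypothesis is $\scr N'{}_{-k}\subseteq\Hom(\scr U_1,\scr N'{}_{-k+1})$ for all $k\geq0$, with $\scr N'{}_1=\scr U_1$.
\begin{itemize}
\item For $k\le p$ this is the first condition in (\ref{V-villkor}).
\item For $k\geq p+1$ it holds by the recursive definition, which explicitly intersects with $\Hom(\scr U_1,\scr N'{}_{-k+1})$.
\end{itemize}
Here $\Hom(\scr U_1,\scr N'{}_{-k+1})$ is viewed as a subspace of $\Hom(\scr U_1,\scr U_{-k+1})=\scr U_{-k}$ via (\ref{defUsubspaces}). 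Proposition~\ref{subalgebra} then gives that $\scr N'{}_{0-}\oplus\scr U_+=\scr N'$ is a subalgebra of $\scr U$.

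\emph{Main obstacle.} The subtle point is whether Proposition~\ref{subalgebra} really yields closure of $[\scr N'{}_{0-},\scr N'{}_{0-}]$. That proposition asserts it ("easy to see"), but I would verify it directly, since the recursive cutoff by $\scr N$ at low degrees might seem to interfere. The check goes by induction on $|x|+|y|$ for $x\in\scr N'{}_{-i}$ and $y\in\scr N'{}_{-j}$.

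\emph{The induction.} We must show $[x,y]\in\scr N'{}_{-i-j}$. By (\ref{recbase}),
\begin{align}
[x,y](u)=[x,y(u)]+(-1)^y[x(u),y]\,.
\end{align}
Both terms lie in $\scr N'$ by the induction hypothesis, because $y(u)\in\scr N'{}_{-j+1}$ and $x(u)\in\scr N'{}_{-i+1}$. (When a degree-$1$ element $u$ appears, the bracket with $\scr U_1$ stays in $\scr N'$ by the $\Hom$ condition.) Hence $[x,y]\in\Hom(\scr U_1,\scr N'{}_{-i-j+1})$.

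It remains to see that $[x,y]\in\scr N_{-i-j}$ when $i+j\geq p+1$. This follows from Step 1 together with $\scr N$ being a subalgebra. When $i+j\le p$ we would also need $[x,y]\in\scr T_{-i-j}$. This does not follow from the hypotheses as stated, so in that range I would rely on Proposition~\ref{subalgebra} exactly as the paper states it, or read the claim as $\scr N'$ being closed up to this. That degree range is the point I expect to require care, likely handled by the paper's implicit assumption that the $\scr T$'s already form a subalgebra in degrees $-p,\dots,0$.
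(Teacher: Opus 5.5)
Your Steps 1 and 2 are exactly the paper's proof. Inclusion in $N$ comes from the second condition in (\ref{V-villkor}) together with the intersection with $N_{-k}$, and closure is delegated to Proposition~\ref{subalgebra}. But the doubt you raise at the end is a real gap. You cannot close it by relying on Proposition~\ref{subalgebra} ``exactly as stated'', because its first assertion is false, and so is the statement you are asked to prove. That assertion says that $T_{-k}\subseteq\Hom(U_1,T_{-k+1})$ for all $k$ makes $T_{0-}\oplus U_+$ a subalgebra. For $k=0$, however, the first condition only says $T_0\subseteq\Hom(U_1,U_1)=U_0$, and nothing forces $[T_{-i},T_{-j}]\subseteq T_{-i-j}$. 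Concretely, let $\dim U_1\geq2$, $D_{2+}=U_{2+}$ (so $N_0=U_0$) and $p=0$. In the notation of Example~\ref{W(n)}, let $T_0=\mathrm{span}\{K^1{}_2,K^2{}_1\}$. Both conditions in (\ref{V-villkor}) hold and $N'_0=T_0$. Yet $[K^1{}_2,K^2{}_1]=K^1{}_1-K^2{}_2\notin T_0$, so $N'$ is not a subalgebra. Taking $T_{-k}=0$ for $k\geq1$, the same example refutes Proposition~\ref{subalgebra}.

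The missing hypothesis is that $T_{-p}\oplus\cdots\oplus T_0$ be closed under the bracket inside the region, i.e.\ $[T_{-i},T_{-j}]\subseteq T_{-i-j}$ whenever $i+j\leq p$. This is needed even in the range you considered safe. For $i+j=p+1$ with $i,j\geq1$, your inductive step requires $[x,y(u)]\in T_{-p}$ with $y(u)\in T_{-j+1}$, which is a bracket of exactly this type.

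Once the hypothesis is added, your induction on $i+j$ is a complete proof, and a more careful one than the paper's:
\begin{itemize}
\item For $i+j\leq p$, closure is the hypothesis itself.
\item For $i+j\geq p+1$, formula (\ref{recbase}) gives $[x,y]\in\Hom(U_1,N'_{-i-j+1})$. Also $[x,y]\in N_{-i-j}$, because $N$ is a subalgebra containing $N'$ by Step~1.
\item The inclusion $[N'_{0-},U_+]\subseteq N'$ is the part of Proposition~\ref{subalgebra} that is correct.
\end{itemize}
The hypothesis does hold in the paper's applications. It holds for $T_0=\rho(\mathfrak g)$, and in Theorem~\ref{corolminimal}, where $T_0=\rho(L_0)$ and $T_{-1}=\rho(L_{-1})$ are images of the semilocal morphism $\rho$. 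So the main result is not affected. You should still state the extra assumption explicitly rather than ``read the claim as closed up to this''.
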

\begin{proof}
Proposition~\ref{subalgebra} says that $\scr N'$ is a subalgebra of $\scr U$, and the condition
$[\scr T_{-k},\scr D_{2+}]\subseteq \scr D_{2+}$ for $k\geq0$ says that $\scr N'$ is a subset of $\scr N$.
\end{proof}

\begin{defi}
As in Definition~\ref{prolong}, let $\scr G_+$ be a Lie algebra 
generated by $\scr G_1 = \scr U_1$,
so that $\scr G_+ \simeq \scr U_+/\scr D_{2+}$ for some ideal $\scr D_{2+}$ of $\scr U_+$ included in $\scr U_{2+}$. 
Let $\scr T_0,\scr T_{-1},\ldots$ be subspaces of $\scr U_0,\scr U_{-1},\ldots$
satisfying the
conditions (\ref{V-villkor}), and define $\scr N'$
from them
as in Proposition~\ref{refinement}. Then the \textbf{reduced prolongation} of
$\scr G_+$ with respect to $\scr T_{-p}\oplus \cdots \oplus \scr T_{-1} \oplus \scr T_{0}$ is the quotient
${\scr N'}/ \scr D$.
\end{defi}

\begin{proposition}\label{314}
Let $\scr G$ be a positively $0$-transitive Lie superalgebra
such that $\scr G_+$
is generated by
$\scr G_1=\scr U_1$, 
and let $\scr D_{2+}\subseteq \scr U_{2+}$ be an ideal of $\scr U_+$ such that $\scr G_+ \simeq \scr U_+/\scr D_{2+}$.
Let $\scr T_0,\scr T_{-1},\ldots,\scr T_{-p}$ be subspaces of $\scr U_0,\scr U_{-1},\ldots,\scr U_{-p}$
satisfying the
conditions
\begin{align}
\rho(\scr G_{-k})\subseteq
\scr T_{-k} &\subseteq \Hom{(\scr U_1,\scr T_{-k+1})}\,, &
[\scr T_{-k},\scr D_{2+}]&\subseteq \scr D_{2+}\,
\end{align}
for some $p\geq0$ and $0\leq k\leq p$, where $\rho:\scr G_{1-} \to \scr U_{\,1-}$ is the injective semilocal Lie superalgebra morphism defined above.
Then 
the Lie superalgebra $\scr G$ is embedded in 
the reduced prolongation
of $\scr G_+$ with respect to $\scr T_{-p}\oplus \cdots \oplus \scr T_{-1} \oplus \scr T_{0}$.
\end{proposition}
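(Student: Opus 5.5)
The plan is to reduce the statement to Theorem~\ref{Kantor-theorem}(a) together with Proposition~\ref{refinement}. By Theorem~\ref{Kantor-theorem}(a) and its proof, $\scr G$ is isomorphic to $\big(\rho(\scr G_{0-})\oplus\scr U_+\big)/\scr D_{2+}$. Here $\rho$ is injective by Proposition~\ref{rhoprop}(b), and $\scr D_{2+}$ is an ideal of $\rho(\scr G_{0-})\oplus\scr U_+$, not only of $\scr U_+$. Let $\scr N'$ be built from $\scr T_0,\ldots,\scr T_{-p}$ as in Proposition~\ref{refinement}. That proposition shows $\scr N'$ is a subalgebra of the idealiser $\scr N$, so $\scr D_{2+}$ is an ideal of $\scr N'$ and $\scr N'/\scr D_{2+}$ is the reduced prolongation. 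It therefore suffices to show that $\rho(\scr G_{0-})\oplus\scr U_+$ is a subspace, hence a subalgebra, of $\scr N'$. The induced map on quotients is then an injective Lie superalgebra morphism, because the same ideal $\scr D_{2+}$ is factored out on both sides.

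For degrees $-k$ with $0\leq k\leq p$, the inclusion $\rho(\scr G_{-k})\subseteq\scr T_{-k}=\scr N'_{-k}$ is exactly the hypothesis. The positive part is all of $\scr U_+$ in both algebras.

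For $k\geq p+1$, I will argue by induction on $k$ that $\rho(\scr G_{-k})\subseteq\scr N'_{-k}=\Hom(\scr U_1,\scr N'_{-k+1})\cap\scr N_{-k}$. The two conditions are handled separately.
\begin{itemize}
\item \emph{First condition.} For $x\in\scr G_{-k}$ and $u\in\scr U_1$, we have $\rho(x)(u)=\rho[x,u]$ with $[x,u]\in\scr G_{-k+1}$. By the induction hypothesis (or the hypothesis on $\scr T_{-p}$ when $k=p+1$), this lies in $\scr N'_{-k+1}$, so $\rho(x)\in\Hom(\scr U_1,\scr N'_{-k+1})$.
\item \emph{Second condition.} $\rho(x)\in\scr N_{-k}$ means $[\rho(x),\scr D_{2+}]\subseteq\scr D_{2+}$. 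This follows because $\scr D_{2+}$ is an ideal of $\rho(\scr G_{0-})\oplus\scr U_+$, as given by Theorem~\ref{Kantor-theorem}(a).
\end{itemize}

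The only point needing care is making sure that Theorem~\ref{Kantor-theorem}(a) supplies this ideal property for the particular subalgebra $\rho(\scr G_{0-})$ and for the chosen $\scr D_{2+}$. The proof of the theorem defines $\scr D_{2+}$ as the kernel of the surjection $\scr U_+\to\scr G_+$. Any ideal with $\scr G_+\simeq\scr U_+/\scr D_{2+}$ via the identity on $\scr U_1=\scr G_1$ coincides with this kernel, so the given $\scr D_{2+}$ is the right one. With that settled, the induction closes and the embedding of $\scr G$ into the reduced prolongation follows.
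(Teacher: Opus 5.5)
Your proposal is correct and follows the paper's proof. Both identify $\scr G$ with $\big(\rho(\scr G_{0-})\oplus\scr U_+\big)/\scr D_{2+}$ via Theorem~\ref{Kantor-theorem}(a), note that this subalgebra lies inside $\scr N'$, and conclude that the quotient embeds in $\scr N'/\scr D_{2+}$. The paper only asserts the inclusion in $\scr N'$; your degree-by-degree induction (using $\rho(x)(u)=\rho[x,u]$ for the $\Hom$ condition and the ideal property for membership in $\scr N$) and your check that $\scr D_{2+}$ is the kernel of the canonical surjection supply the details it leaves implicit.
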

\begin{proof}
Again according to Theorem~\ref{Kantor-theorem}, the Lie superalgebra
$\scr G$ is (with $\scr G_1=\scr U_1$) isomorphic to $\big(\rho(\scr G_{0-})
\oplus \scr{U}_+ \big)/\scr D_{2+}$. 
In this case $\rho(\scr G_{0-})
\oplus \scr {U}_+ $ is a subalgebra not only of $\scr N$ but also of $\scr N'$.
Thus $\big(\rho(\scr G_{0-})
\oplus \scr {U}_+ \big)/\scr D_{2+}$ is a subalgebra of $ \scr N'/\scr D_{2+}$, that is, of the reduced
prolongation of $\scr G_+$
with respect to $\scr T_{-p}\oplus \cdots \oplus \scr T_{-1} \oplus \scr T_{0}$.
\end{proof}

\begin{example}
As in Example~\ref{W(n)}, we can set $D_{2+}=U_{2+}$ and consider $U_1$ as an abelian odd Lie superalgebra, 
but also equip $U_1$
with the structure of a $\fg$-module for some Lie algebra $\fg$ so that there is a corresponding
representation $\rho: \fg \to \End\,U_1$. We may then consider the reduced prolongation of $U_1/D_{2+}$
with respect to $\rho(\fg)$ at degree $0$.
More about this type of reduced prolongation (in the Lie algebra case)
can be found in \cite{kobayashi}. In the next section we will
continue to study the case where $U_1$ is a module
over a Lie algebra $\fg$.

\end{example}

\subsection{Embedding tensors and tensor hierarchy algebras}

Consider the local part of $U$ with basis elements $K^{ab}{}_c,K^a{}_b,E_a$ at degree $-1,0,1$, respectively,
as in Example~\ref{W(n)}. The local part of $W(n)$ is a subalgebra of this local Lie superalgebra, generated by
only the antisymmetric basis elements $K^{[ab]}{}_c$ at degree $-1$ and all basis elements $E_a$ at degree $1$.
We then also get back all $K^a{}_b$ at degree $0$ from brackets $[K^{[ab]}{}_c,E_d]$.
Instead of constructing $W(n)$ by prolongation, as in Example~\ref{W(n)}, we can now construct $W(n)$
as the minimal extension of this 
local Lie superalgebra.

This can be generalised. Instead of $D_{2+}=U_{2+}$
we can let $D_{2+}$ be any other ideal of
$U$ included in $U_{2+}$. Any such ideal $D_{2+}$ will 
determine a corresponding subspace $N_{-1}$ of $U_{-1}$, and by prolongation we then get
a Lie superalgebra with local part $N_{-1}\oplus N_0\oplus U_1$ generated by $N_{-1}$ and $U_1$.
However, 
in general this Lie superalgebra will not be locally generated and thus different from the minimal extension
of its local part, although the
two constructions happen to give the same result in the case of $W(n)$.
In \cite{palmkvistTensorHierarchyAlgebra2014}, generalisations of $W(n)$ were not constructed 
by prolongation
but as
as minimal extensions of their local parts, based on the following definition. 

\begin{defi}\label{defP}
Let $\scr T_{-1}$ be a subspace of $\scr U_{-1}$.
We define $\tilde{\scr P}(\scr U_{1},\scr T_{-1})$ as the subalgebra
of $\scr U$ generated by $\scr U_1$ and $\scr T_{-1}$. Let $\scr I_{2+}$ be the maximal ideal of
$\tilde{\scr P}(\scr U_{1},\scr T_{-1})$
intersecting the local part trivially, thus contained in $\scr U_{2+}$. We define 
the Lie superalgebra ${\scr P}(\scr U_{1},\scr T_{-1})$
as the quotient ${\scr P}(\scr U_{1},\scr T_{-1})=\tilde{\scr P}(\scr U_{1},\scr T_{-1})/\scr I_{2+}$.
\end{defi}

\begin{remark}
In \cite{palmkvistTensorHierarchyAlgebra2014}, the Lie superalgebra
$\tilde{\scr P}(\scr U_{1},\scr T_{-1})$ was denoted by $\mathcal{V}(\scr U_{1},\scr T_{-1})$
and ${\scr P}(\scr U_{1},\scr T_{-1})$ was denoted by $\mathcal{V}'(\scr U_{1},\scr T_{-1})$. 
\end{remark}

\noindent
Suppose now that the vector space $\scr U_1$ is a module 
over some Lie algebra $\fg$, so that there is
a representation $\rho:\fg \to \mathfrak{gl}(U_1)=\End\,{\scr U_1}$, and any element $x \in\fg$ acts on 
any element $u\in \scr U_1$ by $x\cdot u = \rho(x)(u)$. 
Then $\rho(\fg)$ is a subspace of $\End\,\scr U_1$ and
we can consider the reduced prolongation of $U_+$ with respect to $\rho(\fg)$ at degree $0$. 
Its subspace at degree $-1$ is 
\begin{align}
N_{-1}=\Hom{(\scr U_1,\rho(\fg))}\subseteq\Hom{(\scr U_1,\End\,{\scr U_1})}=\scr U_{-1}\,.
\end{align}
It follows that also $N_{-1}$ is a $\fg$-module, with the action of
any element $x \in\fg$ given by the adjoint action of $\rho(x)\in U_1$ in the Lie superalgebra $U$.
We can now restrict $N_{-1}$ further to a submodule $T_{-1}$
and factor out the maximal ideal $I_{2+}$ of $\tilde P(U_1,T_{-1})$ intersecting the local part trivially
in order to obtain $P(U_1,T_{-1})$.

Instead of specifying $T_{-1}$ we can specify an ideal $D_{2+}$ of $U$ included in $U_{2+}$
and let $T_{-1}\oplus \rho(\fg) \oplus U_1$ be the local part of the reduced prolongation of 
$U_+/D_{2+}$ with respect to $\rho(\fg)$, in the same way as for the full prolongation
that we considered above.

If $\fg$ is a complex finite-dimensional simple Lie algebra and $\scr U_1$ is a
finite-dimensional irreducible $\fg$-module with highest weight $\lambda$,
then there will always be an irreducible $\fg$-module included in $\scr U_2$ with highest weight $2\lambda$.
If we let $\scr D_{2+}$ be the ideal of $\scr U_+$ 
generated by this $\fg$-module, 
and let $T_{-1}$ be the subspace of $U_{-1}$ defined from $D_{2+}$ by reduced prolongation as explained above,
then the Lie superalgebra $P(U_1,T_{-1})$ 
is called a \textbf{tensor hierarchy algebra} \cite{palmkvistTensorHierarchyAlgebra2014}.
The reason for this terminology is that the condition $[\rho\circ\Theta,\scr D_{2+}]=0$
then turns out to be equivalent to 
the \textit{representation
constraint} in gauged supergravity theories, with global symmetry algebra $\fg$ and one-form gauge potentials transforming
in the representation $\rho$
corresponding to $\scr U_1$. Gauge covariance then requires the inclusion of two-forms
in the theory, corresponding to the subspace at degree $2$, and leads further to a \textit{tensor hierarchy} of gauge parameters, potentials and field strengths \cite{dewitGaugedSupergravitiesTensor2008}.
A linear map $\Theta :\scr U_1\to\fg$ which in addition to
the representation
constraint $[\rho\circ\Theta,\scr D_{2+}]=0$ also satisfies the \textit{quadratic constraint}
\begin{align} \label{qcon}
[\rho\circ\Theta,\rho\circ\Theta]=0
\end{align}
is in this context called an \textit{embedding tensor}.
As declared in the introduction, we will not go further into the representation constraint in the present paper.
Instead, we will in the next section look closer at the quadratic constraint (\ref{qcon}).

\section{Relationship with the Lie superalgebra induced by a Lie--Leibniz triple}\label{sec4}

This section is devoted to elaborate on the notion of an embedding tensor and the construction of
a Lie superalgebra that is canonically induced by it. 
At the core of this construction is a triple made of a Lie algebra $\fg$, a $\fg$-module $V$ 
with representation map $\rho:\fg\to\mathfrak{gl}(V)$ and 
and a linear map $\Theta:\scr V\to\fg$. 
We will keep this notation throughout the section.
We will refer to $\Theta$ as an embedding tensor although it is not subject to any representation constraint.
However, it does satisfy the quadratic constraint (\ref{qcon}), and we will se that this
induces 
the structure of a Leibniz algebra on $V$.

\subsection{Algebraic properties of Lie--Leibniz triples}

\begin{defi}\label{def:lieleibniztriple}
A triple $(\fg,V,\Theta)$ consisting of a Lie algebra $\mathfrak{g}$, a $\mathfrak{g}$-module $V$
and a linear map $\Theta:V\to \mathfrak{g}$, 
is a \textbf{Lie--Leibniz triple} if it satisfies
the \textbf{quadratic constraint}
\begin{align}
\Theta(\Theta(u) \cdot v) = [\Theta(u),\Theta(v)]\, \label{quadrconstr0}
\end{align}
for any $u,v\in V$.
The triple $(\fg,V,\Theta)$ is a \textbf{strict Lie--Leibniz triple} if $\Theta$ 
is $\fg$-equivariant in the sense that
\begin{align}
\Theta(x\cdot u)=[x,\Theta(u)] \label{etequivariant}
\end{align}
for any $x\in\fg$ and $u\in V$. A Lie--Leibniz triple $(\fg,V,\Theta)$ is \textbf{surjective} if $\Theta$ is surjective, and \textbf{faithful} if $V$ is faithful as a $\fg$-module (that is, if $\rho$ is injective). The linear map $\Theta:V\to\fg$ which is part
of a Lie--Leibniz triple $(\fg,V,\Theta)$ is called an \textbf{embedding tensor}.
\end{defi}
\noindent
Given a Lie algebra $\mathfrak{g}$ and a $\mathfrak{g}$-module $V$, the existence of 
non-trivial such embedding tensors will not be addressed in the present paper, so we will always assume that at least one exists.

By setting $x=\Theta(u)$ in (\ref{etequivariant}), we immediately see that any 
surjective Lie--Leibniz triple is a strict Lie--Leibniz triple, and any strict Lie--Leibniz triple
is a Lie--Leibniz triple.

\begin{defi}
A \textbf{(left) Leibniz algebra} is an algebra $(V,\bullet)$ such that the Leibniz rule
\begin{align}
u\bullet (v\bullet w)=(u\bullet v)\bullet w + v \bullet (u\bullet w)\, \label{leibniz}
\end{align}
holds for any $u,v,w\in V$.
\end{defi}
\noindent
\begin{remark}\label{twodefsoflieleibniztriple}
In \cite{lavauTensorHierarchiesLeibniz2019,lavauLieAlgebraCrossed2023}, a Lie--Leibniz triple was defined as a triple $(\fg,V,\Theta)$ consisting of a Lie algebra $\mathfrak{g}$, a $\mathfrak{g}$-module $V$
and a linear map $\Theta:V\to \mathfrak{g}$, such that $V$ together with a product $\bullet$ is a Leibniz algebra
satisfying two compability conditions: the \textbf{linear constraint} (which should not be confused with
the representation constraint above)
\begin{align}
u\bullet v = \Theta(u)\cdot v \label{linconstr}
\end{align}
and the \textbf{quadratic constraint}
\begin{align}
\Theta(u \bullet v) = [\Theta(u),\Theta(v)]\,. \label{quadrconstr}
\end{align}
It is obvious that any such triple $(\fg,V,\Theta)$ is a Lie--Leibniz triple according to our Definition~\ref{def:lieleibniztriple}.
Conversely, any Lie--Leibniz triple $(\fg,V,\Theta)$ 
induces the structure of a Leibniz algebra on $V$ 
with the product $\bullet$ defined by $u\bullet v = \Theta(u)\cdot v$, since then
\begin{align}
u \bullet (v \bullet w) &= \Theta(u)\cdot \big(\Theta(v)\cdot w\big)\nn\\
&=[\Theta(u),\Theta(v)]\cdot w + \Theta(v)\cdot \big(\Theta(u)\cdot w\big)\nn\\
&=\Theta(u \bullet v) \cdot w + v \bullet (u\bullet w)\nn\\
&=(u \bullet v) \bullet w + v \bullet (u\bullet w).
\end{align}
Furthermore, the linear constraint (\ref{linconstr}) then
holds by definition, and the quadratic constraint~(\ref{quadrconstr}) is the same as
the quadratic constraint (\ref{quadrconstr0}), which $(\fg,V,\Theta)$ satisfies by Definition~\ref{def:lieleibniztriple}.
Thus the two definitions are indeed 
equivalent.
\end{remark}
\begin{remark}
As stated in Remark~\ref{twodefsoflieleibniztriple}, any Lie--Leibniz triple $(\fg,V,\Theta)$ 
induces the structure of a Leibniz algebra on $V$. 
This observation raises the question whether any Leibniz algebra can be obtained in this way from a
Lie--Leibniz triple $(\fg,V,\Theta)$.
Let $V$ be a Leibniz algebra, and define the (left) adjoint map $\mathrm{ad}: V \to \End{\,V}$
by $\big(\mathrm{ad}{(u)}\big)(v)=u\bullet v$.
We note that the adjoint map factorises through the embedding tensor: $\mathrm{ad}=\rho\circ \Theta$.
It then follows from the Leibniz rule (\ref{leibniz}) that
\begin{align}
\mathrm{ad}\Big(\big(\mathrm{ad}(u)\big)(v)\Big)=[\mathrm{ad}(u),\mathrm{ad}(v)]\,.
\end{align}
Thus the subspace $\mathrm{ad}{(V)}$ of $\End\,{V}$ is a subalgebra of $\mathfrak{gl}(V)$,
and $\big(\mathrm{ad}{(V)},V,\mathrm{ad}\big)$
is a surjective (and thereby strict) Lie--Leibniz triple, which in addition is faithful. 
This Lie--Leibniz triple obviously gives back the
original Leibniz algebra by $\big(\mathrm{ad}(u)\big)(v)=u\bullet v$, so we conclude that indeed any Leibniz algebra
can be constructed from a surjective and faithful Lie--Leibniz triple.
The correspondence is one-to-one up to isomorphism, in the sense that two Lie--Leibniz triples
$(\fg,V,\Theta)$ and $(\fg',V',\Theta')$
are isomorphic if
there is a Lie algebra isomorphism $\varphi: \fg \to \fg'$ 
and a vector space isomorphism $\psi: V \to V'$ such that $\varphi \circ \Theta=\Theta'\circ\psi$.
\end{remark}

\begin{remark}\label{remarkfaith}
Suppose the Lie--Leibniz triple $(\fg,V,\Theta)$ is faithful, and set $V=\scr U_1$.
Then we may identify $\fg$ with a subalgebra of $\scr U_0$ by $x\cdot u = x(u)$
and consider $\Theta$ as an element in $\scr U_{-1}$. Furthermore, the quadratic constraint 
may be written
$[\Theta,\Theta]=0$, since
\begin{align}
\tfrac12[\Theta,\Theta](u)(v)&=[\Theta,\Theta(u)](v)\nn\\
&=[\Theta,\Theta(u)(v)]+[\Theta(v),\Theta(u)]\nn\\
&=\Theta\big(\Theta(u)\cdot v\big)-[\Theta(u),\Theta(v)]\,.
\end{align}
\end{remark}
\noindent
More generally, when $x\cdot u = \rho(x)(u)$ for a representation $\rho$ which is not necessarily faithful,
the quadratic constraint 
may be written
$[\rho\circ\Theta,\rho\circ\Theta]=0$ as 
in (\ref{qcon}).

\subsection{A canonically associated Lie superalgebra} \label{sec:item3}

It has been shown in \cite{lavauLieAlgebraCrossed2023, lavauCorrigendumLieAlgebra2023} that to every
Lie--Leibniz triple  $(\fg,V,\Theta)$  is canonically associated a 
Lie superalgebra $\mathbb{T}$, which in addition to the
structure of Lie superalgebra is equipped with an (inner) differential, so that
it in fact is a differential graded Lie algebra.
Upon factoring out the maximal ideal concentrated in degree $3$ and higher,
we denote the resulting Lie superalgebra ${L}=L(\fg,V,\Theta)$. 
In this section we will show how the latter is connected to Kantor's universal Lie superalgebra associated to the vector space $U_1=V[-1]$, and in particular to $P(V[-1],T_{-1})$ for a certain subspace $T_{-1}$
of $\Hom{(V[-1],\End{\,V[-1]})})$.

We still let $(\fg,V,\Theta)$ be a Lie--Leibniz triple. 
Since $V$ is a $\fg$-module, also the 
vector space 
$\mathrm{Hom}(V,\mathfrak{g})$
is a $\fg$-module with the action of $\fg$
induced by the action of $\mathfrak{g}$ on $V$ and the adjoint action of $\mathfrak{g}$ on itself. More precisely, for any linear map $\phi:V\to\mathfrak{g}$,
any $u\in V$ and any $x\in\fg$ we have 
\begin{equation}\label{eq:defhom}
(x\cdot \phi)(u)=\big[x,\phi(u)\big]-\phi\big(\rho(x)(u)\big)\,.
\end{equation}

We would like to extend the direct sum $\fg \oplus V$ to a 
Lie superalgebra $\mathbb{U}$, 
similar to the universal Lie superalgebra associated to a vector space
$U_1$,
where $\mathbb U_0=\fg$ and $\mathbb U_1\simeq V$. However,
this requires a shift of the degree of $V$.
As graded vector spaces, $\fg$ and $V$ have so far been considered as concentrated in degree $0$ and even.
We now shift the degree of $V$ by $1$ and write
$\mathbb{U}_1=V[-1]$ and  $\mathbb U_0=\fg$. 
We can then, as in Section~\ref{uglsa}, extend $\mathbb U_0 \oplus \mathbb U_1$ 
to a semilocal vector space
$\mathbb U_{1-}$ recursively by
\begin{align}
\mathbb U_{-p+1}=\Hom(\mathbb U_1,\mathbb U_{-p+2})\simeq \Hom(({\mathbb U}_1)^{\otimes p},{\mathbb U}_1)\,
\end{align}
for $p=2,3,\ldots$. In particular, $\mathbb U_{-1}=\Hom{(\mathbb U_1,\fg)}$, which is
$\Hom{(V,\fg)}$ shifted to degree~$-1$.
The embedding tensor $\Theta : V \to \fg$ induces a corresponding shifted map $V[-1] \to \fg$ that we also call embedding tensor
and denote by $\Theta$. Thus we consider $\Theta$ as an element in $\mathbb{U}_{-1}$.

The vector spaces $\mathbb U_{\pm1}$ are $\fg$-modules with the actions of $\fg$ given by the action on
the unshifted vector spaces $V$ and $\Hom{(V,\fg)}$, respectively, that latter in turn given by (\ref{eq:defhom}).
If we set $x(u)=x\cdot u$ for $x\in\fg$ and $u\in \mathbb{U}_1$, we can then prove the following proposition in the same way as Proposition~\ref{prop:semilocal}.
\begin{proposition}
The vector space $\mathbb U_{1-}$ together with the 
morphism
\begin{align}
[\cdot,\cdot]\ :\ (\mathbb U_{1-}\otimes \mathbb U_{1-})_{1-} \to \mathbb U_{1-}
\end{align}
defined recursively by 
\begin{align}
[x, u]  &= x(u)\,, &[u,x]  &=- (-1)^xx(u)\,,
&[x , y](u) &= [x ,y(u)]
+ (-1)^{y} [x(u) , y]
\end{align}
for any $x,y\in \mathbb U_{0-}$ and $u\in \mathbb U_1$
is a semilocal Lie superalgebra.
\end{proposition}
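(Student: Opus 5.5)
The plan is to mimic the proof of Proposition~\ref{prop:semilocal}: build the bracket degree by degree and prove the Jacobi identity by induction on the total degree. The difference is that $\mathbb U_0=\fg$ is now an abstract Lie algebra, not $\End\,\mathbb U_1$. So the base of the induction has to be checked directly, using only that $\fg$ is a Lie algebra and $V$ is a $\fg$-module.

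First I fix the brackets that are not defined by recursion. On $\fg\otimes\fg$ the bracket is the Lie bracket of $\fg$. On $\fg\otimes\mathbb U_1$ it is the module action, $[x,u]=x\cdot u$, and $[u,x]=-x\cdot u$. Brackets with $\mathbb U_1$ in the first slot and $\mathbb U_{-k}$, $k\ge1$, in the second are defined by the stated antisymmetry rule. For $x\in\fg$ and $y\in\mathbb U_{-k}$ with $k\ge1$, the recursive formula $[x,y](u)=[x,y(u)]+(-1)^y[x(u),y]$ contains $[x(u),y]$, which is a bracket of $\mathbb U_1$ with $\mathbb U_{-k}$ and is therefore already defined. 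For $k=1$ this gives exactly the action (\ref{eq:defhom}), up to the sign conventions. For $x,y$ both of negative degree, I define $[x,y]$ by the same formula, by induction on $|x|+|y|$. The right-hand side only involves brackets of strictly higher total degree, and $x(u)\in\mathbb U_{|x|+1}$. Antisymmetry follows from the same induction, using the symmetric form of the defining formula.

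Next I verify the Jacobi identity (\ref{jacobi1}). I treat triples $x,y,z$ with $|x|+|y|+|z|=-p$ by descending induction on $-p$, so the highest total degrees come first. There are two kinds of base cases. If at least two of the elements lie in $\mathbb U_1$, the degree is too high for the triple to be defined, unless the third lies in $\mathbb U_{-1}$ or below; there the Jacobi identity is just the definition $[x,u](v)$ together with antisymmetry. Triples in $\fg\oplus\mathbb U_1$ with at most one element in $\mathbb U_1$ reduce to the Jacobi identity of $\fg$ and to $V$ being a module.

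The case $\fg,\fg,\mathbb U_{-k}$ is the only new one compared with Proposition~\ref{prop:semilocal}. There I would show that $[[x,y],z]=[x,[y,z]]-[y,[x,z]]$ by evaluating both sides on $u\in\mathbb U_1$ and using the induction hypothesis on terms with $z(u)$. The terms $[x(u),\cdot]$ then reduce via $[x,y](u)=[x,y(u)]-[y,x(u)]$, which holds because $V$ is a $\fg$-module. For the induction step I evaluate on an arbitrary $u\in\mathbb U_1$ and expand exactly as in the displayed computation in the proof of Proposition~\ref{prop:semilocal}. An element of $\mathbb U_{-k}$, $k\ge1$, is determined by its values on $\mathbb U_1$, so equality of the evaluations gives the identity.

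The main obstacle is the sign and bookkeeping in the mixed cases where some arguments are in $\fg$. There, evaluation on $u$ produces terms $[x(u),\cdot]$ with $x(u)\in\mathbb U_1$, and I must check that every such term has lower complexity, so that the induction hypothesis genuinely applies. To make this clean I would run the induction on the pair (total degree, number of arguments in degree $0$) ordered lexicographically.
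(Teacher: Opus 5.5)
Your proposal takes the same route as the paper. The paper proves this only by saying it goes ``in the same way as Proposition~\ref{prop:semilocal}'' (recursive definition, induction on total degree, evaluation on $u\in\mathbb{U}_1$). Your explicit treatment of degree~$0$ fills in exactly what that sentence leaves implicit: the bracket on $\fg$ is the given Lie bracket, degree-$0$ Jacobi comes from the Lie algebra and module axioms, and evaluation determines elements only in negative degree.

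One small correction. Triples with two entries in $\mathbb{U}_1$ are vacuous, since they always involve the undefined degree-$2$ bracket $[u,v]$. The triples whose Jacobi identity is ``just the recursive definition plus antisymmetry'' are those with exactly one entry in $\mathbb{U}_1$. You do need these, for instance for the terms $(x(u),y,z)$ with $x\in\fg$ in your induction step.
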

\noindent
In particular, $[x,\phi]=x\cdot \phi$ for any $x\in\mathbb U_0=\fg$ and $\phi\in\mathbb U_{-1}$.
By Proposition \ref{freely}, we can furthermore extend this semilocal Lie superalgebra uniquely to a Lie superalgebra
$\mathbb U$
such that $\mathbb U_+$ is the free Lie superalgebra generated by $\mathbb U_1$.

If the $\fg$-module $V$ is faithful, then this
Lie superalgebra $\mathbb U$ is 
positively $0$-transitive and
isomorphic to a subalgebra of the universal Lie superalgebra $U$ associated to $U_1=\mathbb U_1$ (see Definition \ref{defKantor}).
In fact, it is then isomorphic to the reduced prolongation of $\mathbb U_+$ with respect to $\rho(\fg)$.
However, if $V$ is not faithful, then $\mathbb U$ is positively $(-1)$-transitive, but not
positively $0$-transitive. On the other hand, $U$ is always positively $0$-transitive. 

Let $R_\Theta$ be the cyclic $\mathfrak{g}$-submodule of 
$\mathbb{U}_{-1}=\mathrm{Hom}(\mathbb{U}_1,\mathfrak{g})$
generated by $\Theta$, that is to say: the $\mathfrak{g}$-orbit of the embedding tensor $\Theta$, which is defined as
\begin{equation}
R_\Theta =\mathrm{Span}\big(\left\{ x_1\cdot x_2\cdot \cdots\cdot x_n\cdot \Theta,\ x_1,\ldots,x_n\in\mathfrak{g},\ n\in\mathbb{N}^*\right\}\cup\{\Theta\}\big)\,.
\end{equation}
Let us set $\mathfrak{h}=\Theta(\mathbb{U}_1)$; this is a subalgebra of $\fg$ thanks to the quadratic constraint (\ref{quadrconstr0}). From the same identity, we  observe that the right-hand side of \eqref{eq:defhom} is identically vanishing whenever $x\in \mathfrak{h}$, establishing that the embedding tensor $\Theta$ is $\mathfrak{h}$-equivariant.
However, it is not necessarily $\mathfrak{g}$-equivariant, so $R_\Theta$ is in general more than one-dimensional.

Set $\mathbb{T}_{-1}=R_\Theta$ 
and let $\mathbb{T}_{-}\oplus \mathbb{U}_{0+}$
be the subalgebra of $\mathbb{U}$ generated by $\mathbb{T}_{-1}\oplus \mathbb{U}_0 \oplus \mathbb{U}_1$.
The Lie superalgebra $\mathbb{T}$ is then obtained from $\mathbb{T}_-\oplus \mathbb{U}_{0+}$ by factoring out 
an ideal generated by a particular $\mathfrak{g}$-module at degree $2$.
As we will see, the ideal is included in $\mathbb{U}_{2+}$, so the negative part $\mathbb{T}_-$ is not affected.

The symmetric part of the Leibniz product on $V$ can be understood as a bilinear map 
\begin{align}
\{\cdot,\cdot\}:\quad  S^2V\to V\,,\quad 2\{u,v\}=u\bullet v +v\bullet u\,,
\end{align}
By the identification $\mathbb{U}_1=V[-1]$, 
the bilinear map $\{\cdot,\cdot\}$ induces a degree-$(-1)$ bilinear map
\begin{align}
\overline{\{\cdot,\cdot\}}:\quad \Lambda^2\mathbb{U}_1\to \mathbb{U}_1\,,\quad \overline{\{u,v\}}=\{u[1],v[1]\}[-1]
\end{align}
where $\Lambda^2\mathbb{U}_1$ denotes the graded antisymmetric tensor product (which becomes symmetric since $\mathbb{U}_1$ is odd). Since $\mathbb{U}_+$ is freely generated by $\mathbb{U}_1$,
we can identify 
$\mathbb{U}_2$ with $\Lambda^2\mathbb{U}_1$.
Being induced by the embedding tensor, the bilinear map $\overline{\{\cdot,\cdot\}}$ is $\mathfrak{h}$-equivariant, but not necessarily $\mathfrak{g}$-equivariant.
Then, its kernel
\begin{align}
\mathrm{Ker}\,\overline{\{\cdot,\cdot\}}=\left\{\sum_i\,u_i\wedge v_i \in \Lambda^2\mathbb{U}_1 \,\Big|\, \sum_i\,\overline{\{u_i,v_i\}}=0 \right\}\subseteq \Lambda^2\mathbb{U}_1
\end{align}
is stable under the action of $\mathfrak{h}$ but not necessarily under the action of $\mathfrak{g}$. In
other words, it is 
an $\mathfrak{h}$-submodule of $\Lambda^2\mathbb{U}_1$, but not necessarily a $\mathfrak{g}$-submodule.
Let $K$ be the 
sum of all $\mathfrak{g}$-submodules of $\Lambda^2\mathbb{U}_1$ included in
$\mathrm{Ker}\,\overline{\{\cdot,\cdot\}}$. 
It is a $\mathfrak{g}$-submodule 
of $\mathbb{U}_2=\Lambda^2\mathbb{U}_1$, characterised by the following proposition.
\begin{proposition}
The $\mathfrak{g}$-submodule $K$ is the set
\begin{align} \label{kchar}
K = \{k\in \mathbb{U}_2 \ |\ [\mathbb{T}_{-1},k]=0\}\,.
\end{align}
\end{proposition}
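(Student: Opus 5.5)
The plan is to compute the bracket of the embedding tensor with $\mathbb U_2$ explicitly, and then use the Jacobi identity with elements of $\fg=\mathbb U_0$ to pass from $\Theta$ to the whole orbit $R_\Theta=\mathbb T_{-1}$. Throughout, write
\begin{align*}
S=\{k\in \mathbb{U}_2 \ |\ [\mathbb{T}_{-1},k]=0\}\,.
\end{align*}

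\emph{Step 1: the bracket with $\Theta$.} For $u,v\in\mathbb U_1$, the element $[u,v]\in\mathbb U_2$ corresponds to $u\wedge v$ under the identification $\mathbb U_2=\Lambda^2\mathbb U_1$. I will apply the Jacobi identity (\ref{antisymandjacobi}) with $\Theta$ and $u$ both odd, together with the recursive definitions $[\Theta,u]=\Theta(u)\in\fg$ and $[\Theta(u),v]=\Theta(u)\cdot v$. This gives
\begin{align*}
[\Theta,[u,v]]=[[\Theta,u],v]-[u,[\Theta,v]]=\Theta(u)\cdot v+\Theta(v)\cdot u\,,
\end{align*}
where in the last term I use antisymmetry for the pair (odd $u$, even $\Theta(v)$). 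The right-hand side is $u\bullet v+v\bullet u=2\,\overline{\{u,v\}}$, up to the degree shift and possibly an overall sign. I expect the only real care in the whole proof to be tracking these shift signs, and they cannot affect whether the expression vanishes. Extending linearly, for $k\in\mathbb U_2$ we get $[\Theta,k]=0$ if and only if $k\in\mathrm{Ker}\,\overline{\{\cdot,\cdot\}}$.

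\emph{Step 2: $S\subseteq K$.} Since $\Theta\in R_\Theta$, Step 1 gives $S\subseteq\mathrm{Ker}\,\overline{\{\cdot,\cdot\}}$. Because $K$ is the largest $\fg$-submodule contained in this kernel, it remains to show that $S$ is a $\fg$-submodule. Take $k\in S$, $x\in\fg$ and $\phi\in R_\Theta$. Since $x$ is even, the Jacobi identity gives
\begin{align*}
[\phi,[x,k]]=[x,[\phi,k]]-[[x,\phi],k]\,.
\end{align*}
The first term vanishes because $k\in S$. The second term also vanishes: $[x,\phi]=x\cdot\phi$ lies in $R_\Theta$, since $R_\Theta$ is a $\fg$-submodule by definition, and so $[x\cdot\phi,k]=0$. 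Hence $[x,k]\in S$.

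\emph{Step 3: $K\subseteq S$.} Let $M$ be any $\fg$-submodule of $\Lambda^2\mathbb U_1$ contained in the kernel, and take $k\in M$. I will show by induction on $n$ that $[x_1\cdot\cdots\cdot x_n\cdot\Theta,k]=0$. The base case $n=0$ is Step 1. For the induction step, write $\phi=x_2\cdot\cdots\cdot x_n\cdot\Theta$; then
\begin{align*}
[x_1\cdot\phi,k]=[x_1,[\phi,k]]-[\phi,[x_1,k]]\,.
\end{align*}
Both terms vanish by the induction hypothesis, using $[x_1,k]\in M$ in the second. By linearity $[R_\Theta,M]=0$. Since $K$ is the sum of all such $M$, we get $K\subseteq S$, and hence $K=S$.
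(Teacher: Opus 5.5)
Your proposal is correct and follows essentially the same route as the paper. The computation $[\Theta,[u,v]]=\Theta(u)\cdot v+\Theta(v)\cdot u$ identifies the annihilator of $\Theta$ in $\mathbb{U}_2$ with the kernel, which gives one inclusion by maximality of $K$, and the Jacobi identity with elements of $\fg$ propagates $[\Theta,K]=0$ to all of $R_\Theta$ by induction; the only difference is that you spell out why $\{k\in\mathbb{U}_2 \mid [\mathbb{T}_{-1},k]=0\}$ is a $\fg$-submodule, which the paper leaves as ``easy to see''.
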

\begin{proof}
It is easy to see that the right hand side of (\ref{kchar}) is a $\fg$-module. Since $\Theta \in R_\Theta$
and $R_\Theta=\mathbb{T}_{-1}$,
it is also a subset of the set
\begin{align}
\{k\in \mathbb{U}_2  \ |\ [\Theta,k]=0\}\,.
\end{align}
But this set is nothing but $\rm{Ker}{\overline{\{\cdot,\cdot\}}}$, since, for every $u,v\in \mathbb{U}_1$ :
\begin{align} \label{Thetauv}
[\Theta,[u,v]]&=[[\Theta,u],v]+[[\Theta,v],u]\nn\\
&=\Theta(u)\cdot v+\Theta(v)\cdot u=2\overline{\{u,v\}}\,.
\end{align}
Thus the right hand side of (\ref{kchar}) is a $\fg$-module included in $\rm{Ker}{\{\cdot,\cdot\}}$,
and since $K$ is the sum of all such $\fg$-modules, we have
\begin{align}
K \supseteq \{k\in \mathbb{U}_2 \ |\ [\mathbb{T}_{-1},k]=0\}\,.
\end{align}
From (\ref{Thetauv}) it also follows that $[\Theta,K]=0$. Now,
let $x\in\mathfrak{g}$ and $k \in K$. Then for every $\chi\in R_\Theta$, we have
\begin{equation}\label{eq:actiong}
[ [x,\chi], k] = [x, [ \chi,k]]-[ \chi, [x,  k]]
\end{equation} 
When $\chi=\Theta$, the first term on the right-hand side vanishes as shown in (\ref{Thetauv})
and the second term vanishes as well because $K$ is a $\mathfrak{g}$-module. This means that 
$[[x,\Theta], k]=0$.
By the same argument, we deduce that $[ [x, [y, \Theta]], k]=0$ for every $x,y\in\mathfrak{g}$. Using this argument inductively, one shows that $[ \chi,k]=0$ for every $\chi\in R_\Theta$ and $k\in K$, that is
\begin{equation}\label{eq:compatibility}
[R_\Theta,K]=0
\end{equation}
and thus also
$K \subseteq \{k\in \mathbb{U}_2 \ | \ [\mathbb{T}_{-1},k]=0\}\,$.
\end{proof}

\noindent
From this, we deduce that the $\mathfrak{g}$-module $K$ generates an ideal $\mathscr{K}$ 
of $\mathbb{T}_-\oplus \mathbb{U}_+$ that is concentrated in degree $2$ and higher.
Factoring out this ideal, we set ${\mathbb{T}}=(\mathbb{T}_-\oplus \mathbb{U}_+)/\mathscr{K}$. 
This is, in reverse grading, the Lie superalgebra $\mathbb{T}$ obtained in \cite{lavauLieAlgebraCrossed2023, lavauCorrigendumLieAlgebra2023}.

The Lie superalgebra $\mathbb{T}$ is positively $(-1)$-transitive, and thus there are no ideals
included in $\mathbb{T}_{-}$. However, there might still be ideals included in $\mathbb{T}_{3+}$.
By factoring out the maximal such ideal we finally obtain the 
Lie superalgebra ${L}=L(\fg,V,\Theta)$.
We immediately get the following proposition.

\begin{proposition}
The Lie superalgebra $L(\fg,V,\Theta)$ is $(-2,2)$-bitransitive.
\end{proposition}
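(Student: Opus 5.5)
The plan is to apply Proposition~\ref{propbi} directly: $L$ is $(-2,2)$-transitive if and only if it has no non-trivial ideal included in $L_{(-2)-}$ or in $L_{2+}$. I would therefore treat the two halves separately.

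\emph{Positive side (ideals in $L_{(-2)-}$).} First I would check that $\mathbb{T}$ is positively $(-1)$-transitive. Since $\mathbb{T}_{0-}=\mathbb{T}_-\oplus\fg$ is embedded in $\mathbb{U}_{0-}$, and $\mathbb{U}_{-p}=\Hom(\mathbb{U}_1,\mathbb{U}_{-p+1})$ by construction, any $x\in\mathbb{T}_{-p}$ with $p\geq1$ and $[x,\mathbb{U}_1]=0$ is zero as a homomorphism. Factoring out $\mathscr K$ does not affect degrees $\leq 1$, because $\mathscr K$ lies in degrees $\geq 2$. Factoring out the maximal ideal included in $\mathbb{T}_{3+}$ likewise leaves degrees $\leq 1$ and the brackets $L_{-p}\times L_1\to L_{-p+1}$ unchanged. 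Hence $L$ is positively $(-1)$-transitive, and by Lemma~\ref{lemmabi} also positively $(-2)$-transitive. (This is exactly why the subtlety with non-faithful $V$, which spoils $0$-transitivity, does not matter here.)

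\emph{Negative side (ideals in $L_{2+}$).} This is the main step. I would argue via Proposition~\ref{propbi} again. Suppose $\mathscr D$ is an ideal of $L$ included in $L_{2+}$, and let $q\geq 2$ be minimal with $\mathscr D_q\neq 0$.

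If $q\geq 3$, then $\mathscr D$ is included in $L_{3+}$. Its preimage in $\mathbb{T}$ is an ideal included in $\mathbb{T}_{3+}$ (it contains the factored ideal, which lies there). By maximality of that factored ideal, $\mathscr D=0$.

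If $q=2$, then, as in the proof of Proposition~\ref{propbi}, any $d\in\mathscr D_2$ satisfies $[L_{-},d]=0$. In particular $[L_{-1},d]=0$ with $L_{-1}=\mathbb{T}_{-1}=R_\Theta$. Lift $d$ to $\tilde d\in\mathbb{U}_2$ (degree 2 of $\mathbb{T}$ is $\mathbb{U}_2/K$). The condition becomes $[R_\Theta,\tilde d]\in K$. Since $[R_\Theta,\tilde d]$ lies in degree $1$, where $\mathscr K$ and hence $K$-generated terms vanish, we get $[R_\Theta,\tilde d]=0$. By the characterisation (\ref{kchar}), $\tilde d\in K$, so $d=0$.

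Thus $\mathscr D_2=0$, contradicting the choice of $q$. Hence there is no non-trivial ideal in $L_{2+}$, i.e.\ $L$ is negatively $2$-transitive, and combining both sides gives $(-2,2)$-transitivity.

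The expected obstacle is the degree-2 step. One must verify carefully that the bracket $\mathbb{T}_{-1}\times\mathbb{T}_2\to\mathbb{T}_1$ in the quotient is computed by the lift in $\mathbb{U}$. This is the point where the identity $\mathscr K_1=0$ and Proposition~(\ref{kchar}) are really used. One should also check that $\mathscr K_2=K$, so that $L_2=\mathbb{U}_2/K$; this follows from $\mathscr K$ being generated by the $\fg$-module $K$ with $[\mathbb{T}_{-1},K]=0$.
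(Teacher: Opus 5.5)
Your proof is correct and is essentially the paper's argument, which the paper states as immediate from the positive $(-1)$-transitivity of $\mathbb{T}$, the characterisation (\ref{kchar}) of $K$, and the maximality of the ideal factored out in degrees $\geq 3$. You make explicit the degree-$2$ step (with $\mathscr K_1=0$ and $\mathscr K_2=K$) that the paper leaves implicit. One small fix: the phrase ``$[R_\Theta,\tilde d]\in K$'' should read ``$[R_\Theta,\tilde d]\in\mathscr K_1=0$'', which is what you actually use.
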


\noindent
We have arrived to the main result of the paper.
\begin{theorem}\label{corolminimal} Under the assumption that $\mathfrak{g}$ is simple,
$V$ is faithful and $\Theta\neq0$,
the Lie superalgebra 
$L=L(\fg,V,\Theta)$ is isomorphic to $P(V[-1],T_{-1})$ for a subspace $T_{-1}$ of $\Hom{(V[-1],\End{\,V[-1]})}$.
\end{theorem}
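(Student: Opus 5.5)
Since $V$ is faithful, we identify $\fg$ with the subalgebra $\rho(\fg)$ of $\scr U_0=\End\,V[-1]$, with $U_1=\mathbb U_1=V[-1]$. As remarked before the definition of $R_\Theta$, $\mathbb U$ is then isomorphic to a subalgebra of Kantor's universal Lie superalgebra $\scr U$. Its negative part embeds via $\rho$, and $\mathbb U_+\simeq\scr U_+$, since both are freely generated by $U_1$. We take $T_{-1}$ to be the image of $R_\Theta=\mathbb T_{-1}$ in $\scr U_{-1}=\Hom(V[-1],\End V[-1])$; concretely, $T_{-1}=\rho\circ R_\Theta$. The plan is to show that $L$ and $P(V[-1],T_{-1})$ are both minimal extensions of isomorphic local Lie superalgebras. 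The theorem then follows from Proposition~\ref{propminimal} (uniqueness of minimal extensions) together with Corollary~\ref{propbimin}.

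First step: compare local parts. The local part of $P(V[-1],T_{-1})$ is $T_{-1}\oplus[T_{-1},U_1]\oplus U_1$, because $\tilde P$ is generated by $T_{-1}$ and $U_1$ and $\scr I_{2+}$ does not touch degrees $-1,0,1$. The local part of $L$ is $R_\Theta\oplus\fg\oplus V[-1]$, since the ideals factored out lie in degrees $\geq2$. The brackets agree under $\rho$, because $\rho$ is a semilocal morphism by Proposition~\ref{rhoprop}(c). It therefore remains to show $[T_{-1},U_1]=\rho(\fg)$, i.e.\ that $[R_\Theta,\mathbb U_1]=\fg$. This is where the hypotheses enter, and I expect it to be the main obstacle. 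By the Jacobi identity, $[R_\Theta,\mathbb U_1]$ contains $\Theta(\mathbb U_1)=\mathfrak h$. It is moreover stable under $\operatorname{ad}\fg$, since
\begin{align}
[x,\chi(u)]=(x\cdot\chi)(u)+\chi(x\cdot u)
\end{align}
for $x\in\fg$, $\chi\in R_\Theta$, $u\in\mathbb U_1$, and both terms on the right lie in $[R_\Theta,\mathbb U_1]$. Hence $[R_\Theta,\mathbb U_1]$ is an ideal of $\fg$. It is nonzero because $\Theta\neq0$ gives $\mathfrak h\neq0$, so by simplicity of $\fg$ it equals $\fg$.

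Second step: show that $L$ is locally generated. It is generated in positive degrees by $\mathbb U_1$, because $\mathbb U_+$ is free on $\mathbb U_1$. In negative degrees, $\mathbb T_-$ is by definition generated by $\mathbb T_{-1}$ together with $\fg$ and $\mathbb U_1$, and $\fg$ itself is generated by $R_\Theta$ and $\mathbb U_1$ by the first step. So $L$ is locally generated. Since $L$ is $(-2,2)$-transitive, Corollary~\ref{propbimin} shows that $L$ is minimal. On the other side, $P(V[-1],T_{-1})$ is minimal by Definition~\ref{defP} and Proposition~\ref{charmaxmin}(b).

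Third step: conclude. The two Lie superalgebras are minimal extensions of local Lie superalgebras that are identified by $\rho$, so Proposition~\ref{propminimal} yields the isomorphism $L\simeq P(V[-1],T_{-1})$.

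One point needs care. The negative part of $\mathbb T$ is generated inside $\mathbb U_-$, whereas that of $\tilde P$ is generated inside $\scr U_-$. These match because the embedding $\mathbb U\hookrightarrow\scr U$ is a Lie superalgebra morphism, and this in turn uses faithfulness of $V$ (positive $0$-transitivity together with Proposition~\ref{rhoprop}). In fact the minimality argument makes even this comparison unnecessary: both algebras are determined by their local parts alone.
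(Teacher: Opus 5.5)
Your proof is correct, and its key step is the same as the paper's: $[R_\Theta,\mathbb U_1]$ is a nonzero ideal of $\fg$, so it equals $\fg$ by simplicity, which gives $[T_{-1},U_1]=\rho(\fg)$ for $T_{-1}=\rho(R_\Theta)$. The difference is in how you get from there to the isomorphism.

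The paper goes through Section~\ref{sec3}. It uses Proposition~\ref{314}, ultimately Kantor's Theorem~\ref{Kantor-theorem}, to realise $L$ concretely as $(\rho(L_{0-})\oplus U_+)/D_{2+}$ inside the reduced prolongation of $L_+$. It then observes that, once $\rho(L_0)=[T_{-1},U_1]$, the subalgebra $\rho(L_{0-})\oplus U_+$ is the one generated by $T_{-1}$ and $U_1$, namely $\tilde P(V[-1],T_{-1})$. The proof stops there. It leaves implicit that $D_{2+}$ coincides with the maximal ideal $I_{2+}$, which is exactly where the $(-2,2)$-transitivity of $L$ is needed.

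You bypass Kantor's algebra entirely. You compare the local parts through the local isomorphism $\rho$. You note that $L$ is locally generated and $(-2,2)$-transitive, hence minimal by Corollary~\ref{propbimin}, and that $P$ is minimal by construction. You then conclude with Proposition~\ref{propminimal}. Your route is shorter, makes every step explicit, and uses only the results of Section~\ref{item12}. The paper's route also yields a concrete embedding of $L$ into Kantor's universal algebra and its reduced prolongation, which is the bridge the paper is after.

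One step deserves a sentence more than you give it. That the local part of $\tilde P(V[-1],T_{-1})$ is $T_{-1}\oplus[T_{-1},U_1]\oplus U_1$ does not follow from generation alone. It also requires $[[T_{-1},U_1],T_{-1}]\subseteq T_{-1}$. This holds here because $R_\Theta$ is a $\fg$-module and $\rho$ is a semilocal morphism, so $T_{-1}\oplus\rho(\fg)\oplus U_1$ is a local subalgebra of the local part of $U$.
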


\begin{proof} Set $U_1=L_1$ and extend $U_1$ to $U$ as in Section~\ref{uglsa}. Since $V$ is faithful, $L$ is positively $0$-transitive. According to Proposition~\ref{314},
the Lie superalgebra $L$ is then embedded in 
the reduced prolongation
of $\scr L_+$ with respect to $\rho(L_{-1})\oplus\rho(L_0)$,
where $\rho:L_{-1}\to U_{1-}$ is the injective semilocal Lie superalgebra morphism defined in Section~\ref{prolsubsec},
extending the representation map associated to the $\fg$-module structure on $V$.
Restricting to the local part,
we recall that it is given by
\begin{align}
\rho(u_1)&=u_1\,,&\rho(x_0)(u_1)&=[x_0,u_1]\,,& \rho(\phi_{-1})(u_1)&=\rho[\phi_{-1},u_1]=\rho(\phi_{-1}(u_1))\,.
\end{align}
If we now set $T_{-1}=\rho(L_{-1})$, we only have to show that $\rho(L_0)$ is the degree-$0$ component of the
subalgebra of $U$ generated by $U_1=\rho(L_{-1})$ and $T_{-1}=\rho(L_1)$ -- in other words,
that $\rho(L_0)=[T_{-1},U_1]$. Since $L_{-1}$ and $L_0$ are $\fg$-modules, $[L_{-1},L_1]$ is an ideal of $\fg$,
which is nontrivial since $\Theta\neq0$. Then, since $\fg$ is simple, $[L_{-1},L_1]=\fg=L_0$ and
\begin{align}
[T_{-1},U_1]=[\rho(L_{-1}),\rho(L_1)]=\rho[L_{-1},L_1]=\rho(L_0)\,.
\end{align}
\end{proof}
 
\subsection{A perspective from differential graded Lie algebras}
 
Let us conclude this section by a few words regarding algebraic properties of the Lie superalgebra $\scr L(\mathfrak{g},V,\Theta)$.
 By the quadratic constraint~\eqref{quadrconstr}, we have that $[ \Theta,\Theta]=0$ (as an element of ${L}_{-2}$). Then, the adjoint action
 \begin{equation}
d_\Theta=[\Theta,\cdot]:L_\bullet\to L_{\bullet-1}
 \end{equation}
of the embedding tensor $\Theta$ on $\scr L(\mathfrak{g},V,\Theta)$
defines an \emph{inner differential} which, by the graded Jacobi identity, is a derivation of  the Lie bracket, turning $\scr L(\mathfrak{g},V,\Theta)$ into
a \textbf{differential graded Lie algebra}.

A broader perspective on this property is based on a general fact concerning differential graded Lie algebras. Whenever one has a differential graded Lie algebra $(L,d,[\cdot,\cdot])$ with differential $d$ (here of degree $-1$), and a \textbf{Maurer--Cartan element}, namely a  degree $-1$ element  $\Theta$ satisfying  the Maurer--Cartan identity
\begin{equation}\label{eqMC}
d\Theta+\tfrac{1}{2}[\Theta,\Theta]=0,
\end{equation}
one can twist the differential $d$ by the adjoint action of this Maurer--Cartan element by setting
$d_\Theta=d+[\Theta,\cdot\,]$. Indeed, for any element $x$ of the given differential graded Lie algebra, one has:
 \begin{align}
 (d_\Theta)^2(x)&=d^2x+d[\Theta,x]+[\Theta,dx]+[\Theta,[\Theta,x]]\nn\\
 &=[d\Theta,x]+\tfrac{1}{2}[[\Theta,\Theta],x]= \left[d\Theta+\tfrac{1}{2}[\Theta,\Theta],x\right]=0\,. 
 \end{align}
 Then, the twisted differential $d_\Theta$ inherits the derivation property (with respect to the Lie bracket) of the original differential $d$, so that $(L,d_\Theta,[\cdot,\cdot])$ is again a differential graded Lie algebra.
 
 In the case at hand, the Lie superalgebra  $\scr L(\mathfrak{g},V,\Theta)$ can originally be seen as a differential graded Lie algebra with zero differential $d=0$.  The Maurer--Cartan identity \eqref{eqMC} is then satisfied by the embedding tensor, since the first term is identically zero, while the second-term is the quadratic constraint. Thus, the inner differential on $\scr L(\mathfrak{g},V,\Theta)$ corresponds to the twisted differential, as we have $d_\Theta=0
 +[\Theta,\cdot]$.

\begin{example}
The choice of ideal $K$ factored out at degree $2$ implies that whenever $V$ is a Lie algebra, then there is no space of degree higher than $1$. A particular case of this situation is the notion of a \textbf{Lie algebra crossed module}, discussed in detail in \cite{lavauLieAlgebraCrossed2023, lavauCorrigendumLieAlgebra2023}.
\end{example}
 
\begin{example}
For an \textbf{augmented Leibniz algebra} \cite{bordemannGlobalIntegrationLeibniz2017}, the embedding tensor is in the singlet representation of the Lie algebra $\fg$ (which we here take to be real for concreteness). In other words, the embedding tensor $\Theta$ is $\fg$-equivariant, so $R_\Theta=\mathbb{R}[1]$. It implies that $K=\rm{Ker}{\overline{\{\cdot,\cdot\}}}$. The quotient $\Lambda^2\mathbb{U}_1/K$ is canonically isomorphic to the subspace $\mathcal{I}\subset V$ generated by all elements of the form $\{u,v\}$. The subspace $\mathcal{I}$ is an ideal of $V$ (with respect to the Leibniz product), that we call the \textbf{ideal of squares} of $V$. Then the associated differential graded Lie algebra $L(\mathfrak{g},V,\Theta)$ does not contain any space in degree higher than $2$ and we have: 
\begin{equation}\label{eq:augmentedLeibnizalg}
\mathcal{I}[-2]\overset{[1]}{\longrightarrow} V[-1]\overset{\Theta}{\longrightarrow}\mathfrak{g}\overset{0}{\longrightarrow} \mathbb{R}[1]
\end{equation}
Every Leibniz algebra canonically gives rise to an augmented Leibniz algebra, by setting $\mathfrak{g}=\mathfrak{inn}(V)$ -- the space of inner derivations of $V$ -- and $\Theta=\mathrm{ad}:u\mapsto \mathrm{ad}\,u$.
\end{example}

\begin{example}
A typical example of an augmented Leibniz algebra can be found in the \textbf{hemi-semidirect product} of a Lie algebra $\mathfrak{g}$ and a $\mathfrak{g}$-module $M$ \cite{kinyonLeibnizAlgebrasCourant2001}. Let us set $V=\mathfrak{g}\oplus M$, which then inherits a $\mathfrak{g}$-module structure as, for any $a\in\mathfrak{g}$, we set
\begin{equation}\label{eqrepresentation}
a\cdot (b,y)=([a,b],a\cdot y)\,.
\end{equation}
Define the embedding tensor $\Theta:V\to \mathfrak{g}$ to be the projection on first factor (the Lie algebra). It satisfies the quadratic constraint and it is $\mathfrak{g}$-equivariant. The Leibniz product on $V$ inherited from the embedding tensor and \eqref{eqrepresentation} is defined as follows:
\begin{equation}
(a,x)\bullet (b,y)=([a,b],a\cdot y)
\end{equation}
This product is exactly the one defining the hemi-semidirect product in \cite{kinyonLeibnizAlgebrasCourant2001} and the data $(\mathfrak{g},V,\Theta)$ is an augmented Leibniz algebra.

Moreover, since the embedding tensor is $\mathfrak{g}$-equivariant, the kernel of the symmetric bracket $\{\cdot,\cdot\}$ is a $\mathfrak{g}$-module, so that $\scr L_2\simeq\mathcal{I}$, the ideal of squares in $V$. On the other hand, the choice of embedding tensor implies that $\mathcal{I}=M$. Then the differential graded Lie algebra \eqref{eq:augmentedLeibnizalg} associated to this hemi-semidirect product is:
\begin{equation}
M[-2]\overset{[1]}{\longrightarrow} (\mathfrak{g}\oplus M)[-1]\overset{\Theta}{\longrightarrow} \mathfrak{g}\overset{0}{\longrightarrow} \mathbb{R}[1]\,.
\end{equation}
\end{example}

 The following table provides in various cases the form of  the  differential graded Lie algebra canonically associated to a given embedding tensor $\Theta\colon V\to \mathfrak{g}$ (where we again take $\fg$ to be real
 for concreteness). More details about the functorial property of this construction can be found in \cite{lavauLieAlgebraCrossed2023, lavauCorrigendumLieAlgebra2023}.
 
\begin{table}[!h]
\centering
\begin{tabular}{l|c|}
\hline
  \multicolumn{1}{|c|}{\begin{tabular}{@{}c@{}}\textbf{embedding}\\\textbf{tensor}\end{tabular}} & \textbf{(differential) graded Lie algebra}\\
  \hline
   \multicolumn{1}{|c|}{\begin{tabular}{@{}c@{}}\textbf{Lie algebra}\\\textbf{crossed}\\\textbf{module} \\ $V\overset{\Theta}{\longrightarrow} \mathfrak{g}$\\
  {\small $V$ is a Lie algebra} \\
  {\small  $\Theta$ is $\mathfrak{g}$-equivariant}
   \end{tabular}}&
   $V[-1]\overset{\Theta}{\rightarrow} \mathfrak{g}\overset{0}{\rightarrow} \mathbb{R}[1] 
   $\\
    \hline
       \multicolumn{1}{|c|}{\begin{tabular}{@{}c@{}}\textbf{augmented}\\\textbf{Leibniz}\\\textbf{algebra}\\
    $V\overset{\Theta}{\rightarrow} \mathfrak{g}$\\
  {\small  $\Theta$ is $\mathfrak{g}$-equivariant}\end{tabular}}&
  $\mathcal{I}[-2]\overset{[1]}{\rightarrow} V[-1]\overset{\Theta}{\rightarrow} \mathfrak{g}\overset{0}{\rightarrow} \mathbb{R}[1]
  $ \\
   \hline
      \multicolumn{1}{|c|}{\begin{tabular}{@{}c@{}}
  $V\overset{\Theta}{\rightarrow} \mathfrak{g}$ \\ {\small  $V$ is a Lie algebra}\end{tabular}}&
   $V[-1]\overset{\Theta}{\rightarrow} \mathfrak{g}\overset{\cdot(-\Theta)}{\rightarrow} 
   R_\Theta\overset{[ \Theta,\cdot\,]}{\rightarrow}[R_\Theta,R_\Theta]\overset{[ \Theta,\cdot\,]}{\rightarrow}\ldots$ \\
  \hline
    \multicolumn{1}{|c|}{\begin{tabular}{@{}c@{}} \textbf{general}\\\textbf{Lie--Leibniz}\\\textbf{triple} \\ $V\overset{\Theta}{\rightarrow} \mathfrak{g}$ 
  \end{tabular} }&
  $\tiny\cdots \rightarrow \Lambda^2\mathbb{U}_1/K\overset{\overline{\{\cdot,\cdot\}}}{\rightarrow}V[-1]\overset{\Theta}{\rightarrow} \mathfrak{g}\overset{\cdot(-\Theta)}{\rightarrow}R_\Theta\overset{[ \Theta,\cdot\,]}{\rightarrow}[R_\Theta,R_\Theta]\overset{[ \Theta,\cdot\,]}{\rightarrow}\cdots$ \\
    \hline
\end{tabular}
\caption{Description of the injective-on-objects function $\mathbf{LieLeib}\rightarrow\mathbf{DGLie}$~\cite{lavauLieAlgebraCrossed2023, lavauCorrigendumLieAlgebra2023}.}
\label{fig1}
\end{table}

\section{Discussion}\label{sec5}

The present paper finally bridges the gap between two graded Lie superalgebras whose respective structures have been inspired by the construction of tensor hierachies in gauged supergravity theories. 
Besides these applications,
the notions of an embedding tensor and a Lie--Leibniz triple have additionally fueled a line of research in abstract algebra, as their properties have been studied further and generalised to more general contexts \cite{kotovEmbeddingTensorLeibniz2020, shengControllingLinfinityAlgebra2021, rongNonabelianembedding2023, tengEmbeddingTensors2024, CaseiroEmbeddingTensors2024, teng3Leibniz2025}.
In particular, the notion of Lie--Leibniz triples, generalising that of augmented Leibniz algebras, paves the way to a better understanding of the \emph{Coquecigrue problem}, as established by Loday \cite{lodayVersionNonComm1993}. This problem consists in finding an appropriate generalisation of Lie's third theorem for Leibniz algebras, that is, in generalising the notion of a Lie group -- a \emph{Coquecigrue}, or \emph{Cocklicrane} in English -- so that Leibniz algebras are their associated tangent structures. After attempts in solving it in special cases  \cite{kinyonLeibnizAlgebrasLie2007}, or only locally \cite{covezLocalIntegrationLeibniz2013}, a global integration to Lie racks has been proposed \cite{bordemannGlobalIntegrationLeibniz2017}. However, it has the drawback of not being functorial, as opposed to Lie's integration from Lie algebras to simply connected Lie groups. A possible way out of that problem is to rely on the correspondence between Lie--Leibniz triples and (differential) graded Lie algebras. Integrating the canonical differential graded Lie algebra \cite{jubinDifferentialGradedLie2022} associated to a given augmented Leibniz algebra may thus offer an alternative answer to the Coquecigrue problem that is functorial. A recent advance in that direction has been to integrate a Lie--Leibniz triple to a Lie-group rack triple \cite{hayami2025integrationlieleibniztriples}. The problem of finding a functorial \emph{Coquecigrue} for Leibniz algebras is left to further investigation.

Another further lead of investigation is the one raised by the observation that the tensor hierarchy gives rise to a particular Lie $\infty$-algebra, whose structure constants are the coefficients of the $p$-form field strength  \cite{greitzTensorHierarchySimplified2014, bonezziLeibnizGaugeTheories2020, lavauInfinityenhancingLeibnizAlgebras2020, bonezziDualityHierarchiesDifferential2021}. It has been shown that this Lie $\infty$-algebra can be implicitly obtained from the graded Lie superalgebra described in Section \ref{sec:item3}  \cite{greitzTensorHierarchySimplified2014, bonezziDualityHierarchiesDifferential2021}, but an explicit mathematical understanding of the construction is not established yet.

\providecommand{\href}[2]{#2}\begingroup\raggedright\endgroup

\end{document}